%Typeset with AMSLaTeX format file
%%%Rigidity file
%%Colding-Minicozzi et al
%%   10/22/12 removed the previous nodal approach
%% the old nodal stuff can be found in unicI29.tex
%% first file without nodal is unicI30.tex
%%  11/30/12 salvaged lost file from blackout
%% 2/4/13 - back to static case, title change
%% 2/14/13 - B elimated, just use \tau
%% 3/5/13 updated
%% 114 from Tom on 4/21/13
%% 115 updated by Bill/Toby on 4/22/13
%% 119 is submitted version on 4/24/13
%% 120 is revised on 12/18/14; fixed constant (4) in prop 4.8, updated CIMW reference
%% 121 and 122 revised on 2/11/15 after acceptance

%Preamble
%Style section
%\documentstyle[12pt]{amsart}
\documentclass[12pt]{amsart}
\usepackage{epsfig,color}

\headheight=6.15pt \textheight=8.75in \textwidth=6.5in
\oddsidemargin=0in \evensidemargin=0in \topmargin=0in

%Declaration section

%Option 4+ I added cor's
%Theorems, Definitions, and Lemmas in the plain style
%Definitions and Lemmas are jointly numbered within sections

%Option 4+ I added cor's
%Theorems, Definitions, and Lemmas in the plain style
%Definitions and Lemmas are jointly numbered within sections
%\makeatletter
%\@addtoreset{equation}{section}
%\renewcommand{\theequation}{\the\c@section.\the\c@equation}
\makeatother

\setcounter{section}{-1}

\theoremstyle{definition}

\def\fnum{equation} 
\newtheorem{Thm}[\fnum]{Theorem}
\newtheorem{Cor}[\fnum]{Corollary}

\newtheorem{Lem}[\fnum]{Lemma}

\newtheorem{Pro}[\fnum]{Proposition}

%%numbering for equations
\numberwithin{equation}{section}
 %makes rm mean roman

\newcommand{\Vol}{{\text{Vol}}}

\newcommand{\nn}{{\bf{n}}}

\def\RR{{\bold R}}

\def\SS{{\bold S}}

\newcommand{\dv}{{\text {div}}}
\newcommand{\e}{{\text {e}}}

\newcommand{\R}{{\bf{R}}}

\newcommand{\cL}{{\mathcal{L}}}

\newcommand{\eqr}[1]{(\ref{#1})}
\newcommand{\eps}{\epsilon}

\title[Rigidity of generic singularities of mean curvature flow]{Rigidity
  of generic singularities of mean curvature flow}

\author{Tobias Holck Colding}%
\address{MIT, Dept. of Math.\\
77 Massachusetts Avenue, Cambridge, MA 02139-4307.}
\author{Tom Ilmanen}%
\address{Departement Mathematik, ETH Zentrum, 8092 Z\"urich, Switzerland.}
\author{William P. Minicozzi II}%
%\address{Johns Hopkins University\\
%Dept. of Math.\\
%3400 N. Charles St.\\
%Baltimore, MD 21218.}

\thanks{The first and third authors
were partially supported by NSF Grants DMS  11040934, DMS 1206827,  and NSF FRG grants DMS 
 0854774 and DMS 0853501}

%%\date{\today}

\email{colding@math.mit.edu, tom.ilmanen@math.ethz.ch, and minicozz@math.mit.edu}

\begin{document}

\maketitle

\begin{abstract}
  Shrinkers are special solutions of mean curvature flow (MCF)
  that evolve by rescaling and model the singularities.  While there
  are infinitely many in each dimension, \cite{CM1} showed that the
  only generic are round cylinders $\SS^k\times \RR^{n-k}$.  We prove
  here that round cylinders are rigid in a very strong sense.  Namely,
  any other shrinker that is sufficiently close to one of them on
  a large, but compact, set must itself be a round cylinder.

  To our knowledge, this is the first general rigidity theorem for
  singularities of a nonlinear geometric flow.  We expect that the
  techniques and ideas developed here have applications to other
  flows.

  Our results hold in all dimensions and do not require any a priori smoothness.
\end{abstract}

\section{Introduction}

The mean curvature flow is an evolution equation where a hypersurface
evolves over time by locally moving in the direction of steepest
descent for the volume element.

A hypersurface $\Sigma \subset \RR^{n+1}$ is said to be a
{\emph{self-similar shrinker}}, or just {\emph{shrinker}}, if it
is the $t=-1$ time-slice of a mean curvature flow (``MCF'') moving by
rescalings.{\footnote{This means that the time $t$ slice of the MCF is
    given by $\sqrt{-t} \, \Sigma$.  See \cite{A}, \cite{Ch},
    \cite{KKM}, and \cite{N} for examples of shrinkers.}}  Being
self-similar is easily seen to be equivalent to being stationary for
the Gaussian surface area.

By a monotonicity formula of Huisken and an argument of Ilmanen and
White, blow-ups of singularities of a MCF are  shrinkers.
The only generic shrinkers are round cylinders by \cite{CM1}.

\vskip2mm
Our main theorem is the following rigidity or uniqueness theorem for   cylinders:

\begin{Thm} \label{t:uniq1} Given $n$, $\lambda_0$ and $C$, there
  exists $R=R(n,\lambda_0 , C)$ so that if $\Sigma^n \subset
  \RR^{n+1}$ is a shrinker with entropy $\lambda (\Sigma) \leq
  \lambda_0$ satisfying
  \begin{itemize}
  \item[($\dagger$)] $\Sigma$ is smooth in $B_R$ with $H\geq 0$ and $|A| \leq
    C$ on $ B_R \cap \Sigma$,
  \end{itemize}
  then $\Sigma$ is a generalized cylinder $\SS^k \times \RR^{n-k}$ for
  some $k \leq n$.
\end{Thm}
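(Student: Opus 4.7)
My plan is a compactness-and-rigidity argument by contradiction. Suppose no such $R$ exists; then there is a sequence $R_i \to \infty$ and shrinkers $\Sigma_i \subset \RR^{n+1}$ with $\lambda(\Sigma_i) \le \lambda_0$ each satisfying $(\dagger)$ on $B_{R_i}$, yet none a generalized cylinder. The entropy bound gives uniform local Gaussian area bounds, so Brakke/Allard compactness extracts a subsequential integral shrinker limit $\Sigma_\infty$ with $\lambda(\Sigma_\infty) \le \lambda_0$. For each fixed $R$ and large $i$, the bound $|A| \le C$ on $B_R \cap \Sigma_i$ upgrades the varifold convergence to smooth graphical convergence on $B_R$. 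Hence $\Sigma_i \to \Sigma_\infty$ smoothly on compact subsets of $\RR^{n+1}$, and $\Sigma_\infty$ inherits $H \ge 0$, $|A| \le C$ globally, and polynomial volume growth.

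By Huisken's classification of smooth complete embedded mean convex shrinkers with bounded $|A|$ and polynomial volume growth, $\Sigma_\infty$ must be a generalized cylinder $\SS^k(\sqrt{2k}) \times \RR^{n-k}$ for some $0 \le k \le n$. After a fixed rigid motion we may take $\Sigma_\infty$ to be the standard model, so for large $i$ the shrinker $\Sigma_i$ is a normal graph of a smooth function $u_i$ over this cylinder on arbitrarily large balls, with $\|u_i\|_{C^2(B_\rho)} \to 0$ for every fixed $\rho$. To reach the desired contradiction it now suffices to show that $u_i \equiv 0$ (up to a rigid motion fixing the cylinder) once $u_i$ is small enough on a large enough set; equivalently, that in a $C^2$-neighborhood of the cylinder there are no shrinkers other than the cylinder itself.

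This last step is the main obstacle. The natural tool is a Lojasiewicz--Simon inequality for the Gaussian area functional $F$ at the cylinder. Writing out the shrinker equation for $u_i$ yields an elliptic equation that is self-adjoint in the Gaussian weight $e^{-|x|^2/4}$; its linearization at the cylinder has a finite-dimensional nullspace on the cylinder spanned by the infinitesimal symmetries (translations along the $\RR^{n-k}$ factor, rotations of the $\SS^k$ factor, and rotations of $\RR^{n-k}$ that tilt the axis), plus the single unstable sphere-dilation eigenfunction identified in \cite{CM1}. A Lojasiewicz-type estimate would force $u_i$ to lie in the span of the Jacobi fields coming from honest symmetries, while the unstable dilation direction is excluded because any change in the sphere radius away from $\sqrt{2k}$ destroys the shrinker equation and moreover would violate the mean convexity $H\ge 0$ for one sign of the deformation. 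Finally, passing from smooth closeness on the large Euclidean ball $B_{R_i}$ to smallness in the Gaussian-weighted norm needed for the Lojasiewicz inequality is automatic from the entropy bound, which together with $|A|\le C$ makes the tail contribution outside $B_{R_i}$ exponentially negligible. The hard part is setting up the Lojasiewicz inequality for $F$ at a non-compact critical point with the correct weighted function spaces and extracting an a priori bound strong enough to beat the exponential tails uniformly in $i$.
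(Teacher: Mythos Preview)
Your opening compactness reduction is correct and matches the paper's own first step: Lemma~\ref{l:cpns} extracts a smooth multiplicity-one limit (the multiplicity-one conclusion needs an $L$-stability argument from \cite{CM2}, which you glossed over), and Theorem~\ref{t:huisken} identifies that limit as a generalized cylinder. The paper then uses closeness on a large ball together with Lemma~\ref{l:elem} to bootstrap the entropy below $2$ and to obtain a strict lower bound $H \geq \delta > 0$ on $B_{R_1}$, reducing to Proposition~\ref{p:uniq2}.

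Where your proposal diverges is the endgame, and here there is a genuine gap. You propose to finish via a \L{}ojasiewicz--Simon inequality for $F$ at the cylinder and correctly flag this as ``the hard part,'' but in fact it is essentially the whole content of the theorem: such an inequality at a \emph{non-compact} critical point was not available, and the version eventually proven in \cite{CM3} explicitly relies on Theorem~\ref{t:uniq1} as an input (see the remark in the introduction after the outline of proof). So the route you sketch is circular. There is also a structural obstruction: your $\Sigma_i$ is only assumed smooth in $B_{R_i}$ and need not be a graph over the cylinder outside that ball (indeed the theorem is stated for $F$-stationary varifolds), so the function $u_i$ to which you would apply \L{}ojasiewicz is not globally defined, and the entropy bound alone cannot manufacture weighted Sobolev control on a nonexistent graph.

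The paper's argument is entirely different and self-contained. Rather than any analytic-inequality machinery, it runs an iteration: Proposition~\ref{p:step1} uses Huisken monotonicity and White's local regularity to push smoothness, the curvature bound, and $H>0$ from $B_R$ out to $B_{(1+\theta)R}$ with some loss in the constants; Proposition~\ref{p:step2} then \emph{recovers} the sharp constants on $B_{(1+\theta)R - 3}$ by exploiting the Simons-type identity $\cL_{H^2}(A/H) = 0$, which forces $\nabla(A/H)$ to be exponentially small and hence, via Gauss--Codazzi, the eigenvalues of $A/H$ to cluster exactly as on a cylinder. Since the first step gains a multiplicative factor in $R$ while the second loses only an additive constant, iterating pushes the cylindrical estimates out to all of $\Sigma$, and then the global classification Theorem~\ref{t:huisken} applies directly---no \L{}ojasiewicz needed.
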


Smooth will always mean smooth and embedded. \vskip1mm

The entropy $\lambda $ is the supremum of the Gaussian surface areas
of $\Sigma$ over all centers and scales.  When $\Sigma$ is a shrinker,
there is no need to take a supremum as the entropy is always achieved
by the standard Gaussian surface area (see section $7$ of \cite{CM1}).

In particular, Theorem \ref{t:uniq1} implies that a shrinker that is
close to a cylinder in a sufficiently large ball must be isometric to
the cylinder.

\vskip2mm 
We will say that a singular point is {\emph{cylindrical}} if
there is at least one tangent flow that is a multiplicity one
generalized cylinder $\SS^k \times \RR^{n-k}$.  We will prove the
following gap theorem for singularities in a neighborhood of a
cylindrical singular point:

\begin{Thm} \label{t:main} Let $M_t$ be a  MCF in $\RR^{n+1}$.
  Each cylindrical singular point has a space-time neighborhood where
  every non-cylindrical singular point has entropy at least $\eps =
  \eps (n) > 0$ below that of the cylinder.
\end{Thm}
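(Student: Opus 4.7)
The plan is to argue by contradiction, reducing the statement to Theorem \ref{t:uniq1}. Suppose the conclusion fails at a cylindrical singular point $(x_0,t_0)$ whose tangent flow is the cylinder $\Sigma = \SS^k \times \RR^{n-k}$ of entropy $\lambda_{0} := \lambda(\Sigma)$. Then there exists a sequence of non-cylindrical singular points $(x_i, t_i) \to (x_0, t_0)$ in spacetime whose Gaussian densities $\Theta_i := \Theta(x_i, t_i)$ satisfy $\Theta_i > \lambda_0 - 1/i$. Combined with the upper semi-continuity of Gaussian density (from Huisken's monotonicity), this forces $\Theta_i \to \lambda_0$. At each $(x_i,t_i)$, pick a tangent flow, which is a shrinker $\Sigma_i$ with $\lambda(\Sigma_i) = \Theta_i$; by hypothesis, no $\Sigma_i$ is a generalized cylinder.

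The goal is to verify, for $i$ large, the hypotheses of Theorem \ref{t:uniq1} for $\Sigma_i$ on arbitrarily large balls, thereby forcing $\Sigma_i$ to be a cylinder and obtaining a contradiction. The key is to show that the $\Sigma_i$ converge smoothly on compact sets to $\Sigma$. I would establish this by connecting the two rescalings: around $(x_0, t_0)$ the parabolic rescaling of $M_t$ converges smoothly on compact sets to $\Sigma$ (which has $H>0$ strictly and $|A|$ uniformly bounded). Since $(x_i, t_i) \to (x_0, t_0)$, the rescaling of $M_t$ around $(x_i, t_i)$ at scale $\sqrt{t_0 - t_i}$ differs from the rescaling around $(x_0, t_0)$ at that scale only by a translation and a scaling factor that tend to the identity, so the two rescalings converge to the same limit. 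Thus any subsequential Brakke limit $\Sigma_\infty$ of $\Sigma_i$ has $\lambda(\Sigma_\infty) = \lambda_0$ and is supported where the original tangent flow is, forcing $\Sigma_\infty = \Sigma$.

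With $\Sigma_\infty = \Sigma$ identified, Brakke's regularity theorem upgrades the weak convergence to smooth convergence on compact subsets, so for any prescribed $R$, once $i$ is large enough, $\Sigma_i$ is smooth in $B_R$ with $|A| \leq C = C(n)$ on $B_R \cap \Sigma_i$ (using the uniform bounds on $\Sigma$). Moreover, since $H > 0$ strictly on the cylinder, smooth convergence preserves the sign and gives $H \geq 0$ on $B_R \cap \Sigma_i$. We also have the uniform entropy bound $\lambda(\Sigma_i) \leq \lambda_0 + 1$. Applying Theorem \ref{t:uniq1} with $R = R(n, \lambda_0 + 1, C)$ forces $\Sigma_i$ to be a generalized cylinder, contradicting its non-cylindricity.

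The main obstacle is the second paragraph: rigorously identifying $\Sigma_\infty$ with the cylinder $\Sigma$ and upgrading Brakke convergence to smooth convergence with $H \geq 0$ and $|A| \leq C$ on large balls. This is where one must carefully compare tangent-flow rescalings at the nearby points $(x_i,t_i)$ and $(x_0,t_0)$, exploit that the limiting density coincides with $\lambda_0$, and invoke Brakke regularity at each smooth point of the cylinder. Once this step is in place, the contradiction via Theorem \ref{t:uniq1} is immediate.
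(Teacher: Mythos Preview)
Your argument has a genuine gap at the step ``Thus any subsequential Brakke limit $\Sigma_\infty$ of $\Sigma_i$ \ldots is supported where the original tangent flow is.'' The tangent flow $\Sigma_i$ is obtained by blowing up at $(x_i,t_i)$ along some sequence of scales going to \emph{zero}, whereas the comparison you make with the rescaling around $(x_0,t_0)$ only controls the rescaled flow at one \emph{fixed} macroscopic scale (roughly $\sqrt{t_i-t_*}$ for some fixed $t_*<t_0$). Nothing you have written connects these: knowing that $N^i_{s_0}$ is close to the cylinder at the initial rescaled time $s_0$ does not by itself tell you anything about the limit of $N^i_s$ as $s\to\infty$, which is where $\Sigma_i$ lives. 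There could in principle be a long intermediate range of scales on which the flow wanders far from any cylinder before settling down to $\Sigma_i$.

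This is precisely the difficulty the paper's proof is designed to overcome. The entropy hypothesis gives that the total drop $F(N^i_{s_0})-\lim_{s\to\infty}F(N^i_s)$ is small; Proposition~\ref{p:compact} (a compactness statement for rescaled flows with small $F$-drop) then guarantees that on each unit interval $[s_0+j,s_0+j+1]$ the flow is $d_V$-close to some $F$-stationary varifold $V_j$. The rigidity Corollary~\ref{c:uniq2} identifies $V_0$ as a cylinder, and the triangle inequality propagates this to $V_1$, $V_2$, and so on, all the way down to the tangent flow. This iteration from scale to scale is the missing ingredient in your outline; without it, the assertion that $\Sigma_i\to\Sigma$ is unsupported, and the contradiction with Theorem~\ref{t:uniq1} does not close.
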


As a corollary of the theorem, we get uniqueness of type for
cylindrical tangent flows:

\begin{Cor} 
  \label{c:axis}
  If one tangent flow at a singular point of a mean curvature flow is
  a multiplicity one cylinder, then they all are.
\end{Cor}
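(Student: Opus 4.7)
\smallskip
\noindent\emph{Proof plan.} The plan is to suppose, for contradiction, that there are two distinct tangent flows at $(x_0,t_0)$ -- the given multiplicity-one cylinder $\cC=\SS^k\times\RR^{n-k}$ and some non-cylindrical shrinker $\Sigma'$ -- and then to produce, by interpolating between the two rescaling sequences at which these tangent flows arise, a third tangent flow that is forced by Theorem \ref{t:uniq1} to be a cylinder yet which by construction sits a fixed positive $C^2$-distance from every cylinder on a large ball.

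First, by Huisken's monotonicity formula the Gaussian density $\Theta(x_0,t_0)$ is well defined and equals the entropy of every tangent flow at $(x_0,t_0)$, so each tangent flow -- including $\Sigma'$ -- is a shrinker of common entropy $\lambda_0:=\lambda(\cC)$. Applying Theorem \ref{t:uniq1} with this entropy bound and the curvature bound $C:=2|A_{\cC}|$ yields a radius $R$; choosing $\delta>0$ sufficiently small guarantees that any shrinker of entropy $\le\lambda_0$ which is $\delta$-close in $C^2$ to $\cC$ on $B_R$ automatically satisfies $(\dagger)$ there and is therefore a generalized cylinder. In particular, the non-cylindrical $\Sigma'$ must stay at $C^2$-distance more than $\delta$ from every cylinder on $B_R$.

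Next, pick rescaling sequences $\lambda_j\to 0$ with $\tilde M^{\lambda_j}_{t}:=\lambda_j^{-1}(M_{t_0+\lambda_j^2 t}-x_0)\to\sqrt{-t}\,\cC$ and $\mu_j\to 0$ with $\tilde M^{\mu_j}_{t}\to\sqrt{-t}\,\Sigma'$; after relabeling assume $\mu_j<\lambda_j$. For each large $j$ the assignment $\sigma\mapsto\tilde M^{\sigma}_{-1}$ is continuous on $[\mu_j,\lambda_j]$, within $\delta/4$ of $\cC$ on $B_R$ at $\sigma=\lambda_j$ and more than $\delta$-far at $\sigma=\mu_j$, so an intermediate value argument produces $\sigma_j\in(\mu_j,\lambda_j)$ at which the $C^2$-distance of $\tilde M^{\sigma_j}_{-1}$ to $\cC$ on $B_R$ equals $\delta/2$. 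Since $\sigma_j\to 0$, a subsequential limit yields yet another tangent flow $\Sigma''$ at $(x_0,t_0)$ of entropy $\lambda_0$ whose $t=-1$ slice lies within $\delta/2<\delta$ of $\cC$ on $B_R$; by the rigidity from the previous paragraph $\Sigma''$ must be a cylinder, contradicting $\dist_{C^2(B_R)}(\Sigma'',\cC)=\delta/2>0$.

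The main obstacle is ensuring, during the interpolation, that $\tilde M^{\sigma_j}_{-1}$ is actually smooth on $B_R$ with uniformly bounded curvature, so that the $C^2$-distance is meaningful and a genuine smooth shrinker limit $\Sigma''$ can be extracted. A priori, additional singular points of $M$ could accumulate near $(x_0,t_0)$ and contaminate $B_R$ of the rescaled flows at intermediate scales. This is precisely where Theorem \ref{t:main} enters: it guarantees that every non-cylindrical singular point in a space-time neighborhood of $(x_0,t_0)$ has entropy at least $\eps=\eps(n)$ below $\lambda_0$, and this uniform density gap, combined with Huisken's monotonicity and Brakke-type $\eps$-regularity, is what controls where singularities of the rescaled flows can appear in $B_R$ and ensures they do not obstruct either the intermediate value argument or the passage to the smooth shrinker limit $\Sigma''$.
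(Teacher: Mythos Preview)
Your overall strategy---a continuity/interpolation argument between two rescaling scales---is the right instinct and close in spirit to what the paper does, but the execution has a genuine gap and is more circuitous than necessary.

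In the paper the corollary is immediate from (the proof of) Theorem~\ref{t:main}: that proof shows that for any $(x_1,t_1)$ in the neighborhood $U$ with entropy at least $\lambda_k-\delta/2$, the rescaled flow $N_s$ is, for every $s\ge s_0$, within $\kappa/6$ in the varifold metric $d_V$ of some generalized cylinder $V_{\lfloor s-s_0\rfloor}$. Applying this with $(x_1,t_1)=(x_0,t_0)$, every subsequential limit of $N_s$ as $s\to\infty$ is within $\kappa/3$ of some cylinder and hence, by Corollary~\ref{c:uniq2}, is itself a cylinder---which is exactly the corollary. So once you are willing to invoke Theorem~\ref{t:main}, as you do in your last paragraph, there is nothing left to prove.

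The genuine gap is in your handling of regularity. You correctly identify that smoothness of $\tilde M^{\sigma}_{-1}$ on $B_R$ for all intermediate $\sigma\in[\mu_j,\lambda_j]$ is not automatic, but your proposed fix via Theorem~\ref{t:main} does not deliver it: the entropy gap that theorem supplies is only $\lambda_k-\eps$, which is far above the Brakke/White regularity threshold of $1+\eps_0$, so it does not rule out singularities of the rescaled flow in $B_R$. Nor does it say anything about nearby \emph{cylindrical} singular points, which suffer no gap at all. The clean way around all of this is to replace $C^2$-distance by the varifold metric $d_V$ (as the paper does via Corollary~\ref{c:uniq2} and Proposition~\ref{p:compact}); then $\sigma\mapsto\tilde M^{\sigma}_{-1}$ is automatically continuous, the intermediate value step needs no regularity hypothesis, and rigidity for the limiting $F$-stationary varifold comes from Corollary~\ref{c:uniq2}. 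Carried out this way your argument becomes essentially the proof of Theorem~\ref{t:main} specialized to the single point $(x_0,t_0)$.

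A minor additional point: your interpolation should track the distance to the full $SO(n+1)$-orbit of $\cC$ rather than to $\cC$ alone; otherwise the limit $\Sigma''$ could be a rotated cylinder sitting at distance exactly $\delta/2$ from $\cC$, which is no contradiction.
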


By definition, a tangent flow is the limit of a sequence of rescalings
at the singularity, where the convergence is on compact subsets.  For
this reason, it is essential for applications of Theorem
\ref{t:uniq1}, like Corollary \ref{c:axis}, that Theorem \ref{t:uniq1}
only requires closeness on a fixed compact set.
 
These results are the first general uniqueness theorems for tangent
flows to a geometric flow at a non-compact singularity.  Some special
cases for MCF were previously analyzed assuming either some sort of
convexity or that the hypersurface is a surface of rotation; see
\cite{H1}, \cite{H2}, \cite{HS}, \cite{W1}, \cite{SS}, \cite{AAG}, and
section $3.2$ in the book \cite{GGS}.  In contrast, uniqueness for
blowups at compact singularities is better understood; cf.\ \cite{AA}
and \cite{Si}.

\vskip2mm In each dimension, the sphere has the lowest entropy among
closed shrinkers, but in higher dimensions there are smooth noncompact
shrinkers, very close to Simons cones, with entropy below the
cylinders; see \cite{CIMW} or \cite{I1} for details.  The results of
this paper may be compared and contrasted with Brakke's regularity
theorem, \cite{B}, which shows not only that the hyperplane isolated among
shrinkers, but there is a gap to the entropy of all other shrinkers.

\subsection{Rigidity without smoothness}
 
The rigidity theorem holds even when the shrinker is not required to 
be smooth outside of the ball of radius $R$.  This is important in 
applications, including Theorem \ref{t:main} and Corollary
\ref{c:axis}.
  
Smooth shrinkers are stationary for Gaussian surface area (called the
$F$ functional in \cite{CM1}). Accordingly, weak solutions of the
shrinker equation can be defined as $F$-stationary $n$-dimensional
integer rectifiable varifolds.  We will refer to these simply as
$F$-stationary varifolds.  Theorem \ref{t:uniq1} holds for
$F$-stationary varifolds that are smooth in $B_R$ with the given
estimates.
 
  \subsection{Outline of proof}
  
  The proof of Theorem \ref{t:uniq1} uses iteration and improvement. 
  Roughly speaking, the theorem assumes that the shrinker is   cylindrical on some  large scale.   
  The iterative step then shows that it is  cylindrical on an even larger scale,
  but with some loss in the estimates.  The improvement step then comes back and says 
  that there was actually no loss if the scale is large enough.  Applying these two steps 
  repeatedly gives that 
 the shrinker is   roughly cylindrical on all scales, which will easily give the theorem.  
  We will make this outline more precise in Section \ref{s:ingreds}.

 \vskip2mm
 
Corollary \ref{c:axis}  shows that if one tangent flow at a singular point of a MCF is a multiplicity one cylinder, then all are. However, it leaves open the possibility that the direction of the axis (the $\RR^k$ factor) depends on the sequence of rescalings. This has since been settled in \cite{CM3} where it was shown that even the axis is independent of the sequence of rescalings.  The proof given there, in particular, the first Lojasiewicz type inequality there, has its roots in   ideas and inequalities of this paper and in fact implicitly uses that cylinders are isolated among shrinkers
by Theorem \ref{t:uniq1}.    The  results here and in \cite{CM3} were also used in 
  \cite{CM4}, where new rectifiability results for the singular set were obtained.
%shows that for a MCF of closed embedded hypersurfaces in $\RR^{n+1}$ with only generic singularities the space-time singular set is contained in   finitely many compact embedded $(n-1)$-%dimensional Lipschitz submanifolds plus a set of dimension at most $n-2$.  

\vskip2mm
  
 The results of this paper were discussed in \cite{CMP}.

\section{Notation and background from \cite{CM1}}

We begin by recalling the classification of smooth, embedded mean
convex shrinkers in arbitrary dimension from theorem $0.17$ in
\cite{CM1}:

  \begin{Thm}[\cite{CM1}] \label{t:huisken} $\SS^k\times \RR^{n-k}$
    are the only smooth complete embedded shrinkers without boundary,
    with polynomial volume growth, and $H \geq 0$ in $\RR^{n+1}$.
  \end{Thm}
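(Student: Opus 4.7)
The plan is to extend Huisken's classification from the bounded-curvature setting to the polynomial-volume-growth setting. The central analytic object is the drift Laplacian $\LL = \Delta - \tfrac{1}{2}\langle x, \nabla \cdot \rangle$, which is self-adjoint with respect to the Gaussian measure $e^{-|x|^2/4}\, d\mu$; on any shrinker one has the two identities $\LL H = \bigl(\tfrac{1}{2} - |A|^2\bigr) H$ and a Simons-type formula for $|A|^2$. These combine into a \textbf{dichotomy plus rigidity} argument.

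First I would apply the strong maximum principle to the elliptic equation $\LL H + \bigl(|A|^2 - \tfrac{1}{2}\bigr) H = 0$: since $H \geq 0$, either $H \equiv 0$, in which case $\langle x, \nn \rangle \equiv 0$ and $\Sigma$ is a smooth embedded minimal cone through the origin, hence a hyperplane (the degenerate $k = 0$ case), or $H > 0$ strictly. In the latter case, set $w = |A|^2/H^2$; combining Simons' identity with the equation for $H$ yields a Huisken-type identity of the schematic form
\[
  \LL w + 2 \bigl\langle \nabla \log H^2, \nabla w \bigr\rangle \;=\; \frac{2}{H^4}\,\bigl|\, H\,\nabla A - \nabla H \otimes A \,\bigr|^2 \;\geq\; 0,
\]
so $w$ is a subsolution for the corresponding weighted drift operator, and the nonnegative right-hand side pointwise controls the tensor $\nabla A - H^{-1}\nabla H \otimes A$.

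The main obstacle, and the key departure from Huisken's original argument, is that $w$ need not be bounded, so one cannot just evaluate at a maximum point. I would instead test the inequality against $w\,\phi^2\, e^{-|x|^2/4}$ for a cutoff $\phi$ supported in a large ball and integrate by parts. Polynomial volume growth together with the super-polynomial decay of $e^{-|x|^2/4}$ controls all boundary contributions, and a Moser--Stampacchia style iteration --- using crucially that the Gaussian weight beats any polynomial-in-$|A|$ loss --- upgrades the weighted subsolution property to the conclusion that the nonnegative right-hand side integrates to zero. Hence $\nabla A = H^{-1}\nabla H \otimes A$ pointwise, and $w$ is constant on $\Sigma$.

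Finally, with $w$ constant and $H > 0$, the pointwise identity $\nabla A = H^{-1}\nabla H \otimes A$ together with the Codazzi equations forces the second fundamental form at each point to have $k$ equal nonzero principal curvatures and $n-k$ zero principal curvatures, with $k$ independent of the point. The zero-eigenspace distribution is then parallel and integrable, so by completeness $\Sigma$ splits isometrically as $\Sigma_0^k \times \RR^{n-k}$, where $\Sigma_0^k$ is a complete embedded umbilical shrinker in $\RR^{k+1}$ and therefore the round sphere of radius $\sqrt{2k}$; this yields $\Sigma = \SS^k \times \RR^{n-k}$ as claimed.
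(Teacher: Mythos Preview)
This theorem is not proved in the present paper: it is quoted verbatim as theorem~0.17 of \cite{CM1}, and the paragraph following the statement only records the history (Huisken \cite{H1}, \cite{H2} under the extra hypothesis that $|A|$ is bounded; then \cite{CM1} removes that hypothesis). So there is no proof here to compare against.

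That said, your outline is essentially the \cite{CM1} strategy. The identity you write for $w=|A|^2/H^2$ is, up to the coefficient on the drift term, exactly Proposition~\ref{p:eqnsg} of this paper, namely $\cL_{H^2}\,(|A|^2/H^2)=2\,|\nabla(A/H)|^2$; and your cutoff integration is the global version of Proposition~\ref{p:effective}. The dichotomy via the strong maximum principle on $H$, and the final step (parallel $A/H$ $\Rightarrow$ constant principal curvatures $\Rightarrow$ isometric splitting) are also as in \cite{CM1}.

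The one place your outline is genuinely thin is the sentence ``polynomial volume growth together with the super-polynomial decay of $\e^{-|x|^2/4}$ controls all boundary contributions.'' Polynomial volume growth alone gives $\int_\Sigma \e^{-|x|^2/4}<\infty$, but to send the cutoff to infinity in the $\cL_{H^2}$ identity you need $\int_\Sigma |A|^2\,\e^{-|x|^2/4}<\infty$, and a~priori $|A|$ is completely uncontrolled --- it could grow faster than any polynomial. Establishing this weighted $L^2$ bound on $|A|$ from the shrinker equation and polynomial volume growth is precisely the new analytic content of \cite{CM1} over Huisken's bounded-$|A|$ argument, and ``Moser--Stampacchia iteration'' does not quite capture how it is done there: it is closer to a direct energy estimate exploiting the Simons identity $\cL|A|^2=|A|^2-2|A|^4+2|\nabla A|^2$ together with a carefully chosen (logarithmic) cutoff that absorbs the quartic term. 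Once that integrability is secured, the rest of your argument goes through.
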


  The $\SS^k$ factor in Theorem \ref{t:huisken} is round, centered at
  $0$, and has radius $\sqrt{2k}$; we allow the possibilities of a
  sphere ($n-k = 0$) or a hyperplane (i.e., $k=0$), although Brakke's
  theorem rules out the multiplicity one hyperplane as a tangent flow
  at a singular point.

  The classification of smooth embedded shrinkers with $H \geq 0$
  began with \cite{H1}, where Huisken showed that round spheres are
  the closed ones. In a second paper, \cite{H2}, Huisken showed that
  the generalized cylinders $\SS^k\times \RR^{n-k}\subset \RR^{n+1}$
  are the open ones with polynomial volume growth and $|A|$ bounded.
  Theorem $0.17$ in \cite{CM1} completed the classification by
  removing the $|A|$ bound.
 
 \subsection{Notation}

Let $\Sigma \subset \RR^{n+1}$ be a hypersurface, $\nn$ its unit
normal, $\Delta$ its Laplace operator, $A$ its second fundamental
form, $H = \dv_{\Sigma} \, \nn$ its mean curvature, $x$ is the
position vector, and let $v^T$ denote the tangential projection of a
vectorfield $v$ onto $\Sigma$.

It is easy to see that a hypersurface $\Sigma$ is a shrinker if it satisfies the equation
\begin{equation}	\label{e:ss}
	H = \frac{ \langle x , \nn \rangle }{2} \, .
\end{equation}
 We will use the  operators $\cL$ and $L$ on shrinkers from \cite{CM1} defined by
\begin{align}
	\cL &= \Delta - \frac{1}{2} \nabla_{x^T} \, , \\
	L &= \cL + |A|^2 + \frac{1}{2} \, .
\end{align}
With our convention on $H$, 
a one-parameter family of hypersurfaces  $\Sigma_t \subset \RR^{n+1}$  flows by mean curvature if
\begin{equation}
	\left( \partial_t x \right)^{\perp} =  -H\,\nn \, .
\end{equation}

It is convenient to recall the family of functionals on the space of
hypersurfaces given by integrating Gaussian weights with varying
centers and scales.  These are often referred to as Gaussian surface
areas.  For $t_0>0$ and $x_0\in \RR^{n+1}$, define $F_{x_0,t_0}$ by
\begin{align}
  F_{x_0 , t_0} (\Sigma) = (4\pi t_0)^{-n/2} \, \int_{\Sigma} \,
  \e^{-\frac{|x- x_0|^2}{4t_0}} \, d\mu \, .
\end{align}
We will think of $x_0$ as being the point in space that we focus on
and $\sqrt{t_0}$ as being the scale. Write $F=F_{0,1}$.
      
The entropy is the supremum over all Gaussians and is given by 
\begin{equation}
  \lambda (\Sigma)=\sup_{x_0 , t_0}   \, \,  F_{x_0,t_0} (\Sigma)  \, .
\end{equation}
Here the supremum is over all $t_0>0$ and $x_0\in \RR^{n+1}$.  The
entropy is invariant under dilation and rigid motions and, as a
consequence of a result of Huisken, \cite{H1}, is monotone
nonincreasing under both MCF and rescaled MCF.

Note that both the $F$-functionals and 
the entropy are defined for rectifiable varifolds.

\section{Key ingredients for the rigidity of the
  cylinder} \label{s:ingreds}

In this section, we will prove the main theorem using the iterative
step (Proposition \ref{p:step1} below) and the improvement step
(Proposition \ref{p:step2} below).  These propositions will be proven
in Sections \ref{s:step1} and \ref{s:step2}, respectively.  We will
use the main theorem to prove Theorem \ref{t:main} and Corollary
\ref{c:axis} in subsection \ref{ss:proofs}.

\vskip2mm 
Throughout this section $\Sigma \subset \RR^{n+1}$ is an
$n$-dimensional $F$-stationary varifold.  On a first reading, the
reader may prefer to ignore the technicalities that arise because
$\Sigma$ is not assumed to be smooth.  The extra difficulties dealing
with non-smoothness are very minor and easy to overcome.

Smooth will always mean smooth and embedded.
\vskip1mm

\begin{Pro} \label{p:step1} Given $\lambda_0 < 2$ and $n$, there exist
  positive constants $R_0$, $\delta_0$, $C_0$ and $\theta$ so that if
  $\lambda (\Sigma) \leq \lambda_0$, $R \geq R_0$, and
  \begin{itemize}
  \item $B_R \cap \Sigma$ is smooth with $H\geq 1/4$ and $|A| \leq
    2$,
  \end{itemize}
  then $B_{(1+\theta)R} \cap \Sigma$ is smooth with $H \geq \delta_0 $
  and $|A| \leq C_0$.
\end{Pro}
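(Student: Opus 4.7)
The plan is a compactness-contradiction argument combined with elliptic regularity for the shrinker equation, set up so that the classification result Theorem \ref{t:huisken} forces a cylindrical limit. Since this is an iterative step destined to be followed by an improvement, I will not try to recover the original constants $1/4$ and $2$, only some definite $\delta_0$ and $C_0$.

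Assume the proposition fails for some fixed choice of targets $(\theta,\delta_0,C_0)$; one then obtains a sequence $R_i\to\infty$ and $F$-stationary $\Sigma_i$ with $\lambda(\Sigma_i)\le\lambda_0<2$ satisfying the hypothesis on $B_{R_i}$ but violating the conclusion on $B_{(1+\theta)R_i}$. On any fixed compact $K\subset\RR^{n+1}$, once $R_i\supset K$, the uniform bound $|A|_{\Sigma_i}\le 2$ together with the strict entropy bound $\lambda_0<2$ (which rules out multiplicity $\ge 2$ in any varifold limit, as a plane of multiplicity $m$ has entropy $m$) yield $C^\infty_{\text{loc}}$ subsequential convergence to a smooth $F$-stationary limit $\Sigma_\infty\subset\RR^{n+1}$ with $|A|\le 2$, $H\ge 1/4$, and polynomial volume growth. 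By Theorem \ref{t:huisken} the limit is a generalized cylinder $\SS^k\times\RR^{n-k}$, necessarily with $k\ge 1$ (the hyperplane case is ruled out by $H>0$). Every such cylinder satisfies the exact identities $H\equiv\sqrt{k/2}\ge 1/\sqrt 2$ and $|A|\equiv 1/\sqrt 2$ pointwise, strictly inside, say, the targets $\delta_0=1/3$ and $C_0=1$.

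The heart of the argument, and the main obstacle, is to upgrade this smooth convergence on fixed compact sets to smoothness and the stated pointwise bounds on the \emph{expanding} ball $B_{(1+\theta)R_i}$. The plan is to write $\Sigma_i$ as a normal graph of a small function $u_i$ over the approximating cylinder inside $B_{R_i/2}$ (possible once $R_i$ is large enough, by smooth compactness), observe that $u_i$ satisfies a quasilinear elliptic PDE obtained from the shrinker equation \eqr{e:ss}, and extend this graph outward. Interior $C^{2,\alpha}$-estimates for this PDE—uniform because $|A|\le 2$ fixes the graphical scale geometrically—allow one to advance the graph a fixed additive distance past the current radius with a controlled additive loss of $C^0$-closeness to the cylinder. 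Iterating this extension $O(\theta R_i)$ times sweeps out the annulus $B_{(1+\theta)R_i}\setminus B_{R_i}$, and the per-step loss tends to $0$ with $i$ fast enough (by the smooth compactness in paragraph two) for the accumulated loss to stay strictly inside the targets.

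The delicate point is to rule out the possibility that $\Sigma_i$ actually splits off some singular stratum in the annulus that is invisible to the graphical continuation. This is handled by Brakke's pseudolocality/regularity theorem: at any point $x\in B_{(1+\theta)R_i}$ whose distance to the axis of the limit cylinder is close to $\sqrt{2k}$, Huisken's monotonicity transports a small-scale Gaussian density $F_{x,t_0}(\Sigma_i)$ back to a density computation inside the compact region where smooth convergence already forces values near $1$; Allard/Brakke regularity then yields smoothness of $\Sigma_i$ near $x$ with uniform curvature bound. Together with the translational invariance of the limit cylinder in the $\RR^{n-k}$ directions (which makes the same density estimate uniform over the annulus), this gives smoothness and the target bounds $H\ge\delta_0$, $|A|\le C_0$ throughout $B_{(1+\theta)R_i}$ for $i$ large, contradicting the failing sequence.
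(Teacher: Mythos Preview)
Your paragraph~3 contains a real gap. Compactness gives closeness to the cylinder only on \emph{fixed} compact sets; it gives no quantitative rate. To extend a graph from where you have control out to $B_{(1+\theta)R_i}$ you must iterate an extension step $O(R_i)$ times, and a unit-step continuation for a quasilinear elliptic equation at best multiplies the graphical error by a constant $C>1$ (the drift in the shrinker equation is not translation-invariant, so there is no reason for the loss to be merely additive). After $O(R_i)$ steps the error is $C^{O(R_i)}\eta_i$, and nothing in your argument forces $\eta_i$ to decay exponentially in $R_i$. So the accumulated loss is uncontrolled, and this step does not close.

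What does work is the idea you relegate to paragraph~4, and this is essentially what the paper does---directly, with no contradiction and no reference to a limiting cylinder or its axis. From $|A|\le 2$ and $\lambda(\Sigma)\le\lambda_0<2$ one gets, at every point of $B_{R-1/4}\cap\Sigma$ and every sufficiently small scale, Gaussian density at most $1+\epsilon$ (the entropy bound below $2$ forces a single sheet). Huisken monotonicity, in the form of the inequality $F_{sy,\,1+as^2}(\Sigma)\le F_{y,\,1+a}(\Sigma)$ for $s>1$ recorded in \cite{CM1}, then transports these small-scale density bounds from $B_{R-1/4}$ to $B_{(1+\theta)(R-1/4)}$ for a definite $\theta>0$ and a definite range of small scales. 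Allard (Lemma~\ref{l:allardreg}) yields smoothness there, and White's local regularity theorem \cite{W2} applied to the associated flow $\Sigma_t=\sqrt{-t}\,\Sigma$ gives the curvature bound $|A|\le C_0$.

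Finally, $H\ge\delta_0$ on the larger ball requires its own argument; Brakke/Allard regularity does not see the sign of $H$, and your route to it goes through the broken graphical extension. The paper exploits the self-similar structure again: once $|A|$ is bounded on $B_R\cap\Sigma_t$ for a short time interval $t\in(-1,\delta-1)$, Ecker--Huisken interior estimates bound $\nabla A$ and $\nabla^2 A$, hence $\partial_t H=\Delta H+|A|^2H$ is uniformly bounded. The strict positivity $H\ge 1/4$ at $t=-1$ therefore persists as $H\ge\delta_0$ for a definite positive time, and under the self-similar scaling a positive time interval corresponds exactly to a multiplicative enlargement of the spatial ball for $\Sigma$ itself.
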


From now on, $\delta_0$ and $C_0$ will be given by the previous proposition.

\begin{Pro} \label{p:step2} Given $n$, $\lambda_0 > 0$, $\delta_0 > 0
  $ and $C_0$, there exists $R_1$ so that if $\lambda (\Sigma) \leq
  \lambda_0$, $R \geq R_1$, and
  \begin{itemize}
  \item $B_{R} \cap \Sigma$ is smooth with $H\geq \delta_0 $ and
    $|A| \leq C_0$,
  \end{itemize}
  then $H\geq 1/4$ and $|A| \leq 2$ on $B_{R-3} \cap \Sigma$.
\end{Pro}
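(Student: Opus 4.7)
I would argue by contradiction via a compactness/blow-up scheme whose punchline is Huisken's classification (Theorem~\ref{t:huisken}). Suppose the proposition fails: there exist $F$-stationary varifolds $\Sigma_i$ with $\lambda(\Sigma_i)\leq\lambda_0$ and radii $R_i\to\infty$ such that $B_{R_i}\cap\Sigma_i$ is smooth with $H\geq\delta_0$ and $|A|\leq C_0$, together with points $p_i\in B_{R_i-3}\cap\Sigma_i$ at which $H(p_i)<1/4$ or $|A(p_i)|>2$. After passing to a subsequence, either $p_i$ stays bounded or $|p_i|\to\infty$.

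The entropy bound gives uniform polynomial area estimates on large balls, and $|A|\leq C_0$ on balls of growing radius gives uniform local smoothness, so standard compactness for smooth $F$-stationary varifolds produces a subsequence $\Sigma_i\to\Sigma_\infty$ converging smoothly on compact subsets of $\RR^{n+1}$. The limit $\Sigma_\infty$ is a complete smoothly embedded shrinker with $H\geq\delta_0>0$, $|A|\leq C_0$, and polynomial volume growth; Theorem~\ref{t:huisken} then forces $\Sigma_\infty=\SS^k\times\RR^{n-k}$ for some $k\geq 1$ (mean convexity excludes $k=0$). On this cylinder $H\equiv\sqrt{k/2}\geq 1/\sqrt{2}>1/4$ and $|A|\equiv 1/\sqrt{2}<2$, both strictly inside the target intervals with margin. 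If $p_i$ is bounded, pass to a further subsequence with $p_i\to p_\infty\in\Sigma_\infty$; smooth convergence yields $H(p_i)\to H(p_\infty)>1/4$ and $|A(p_i)|\to|A(p_\infty)|<2$ for large $i$, contradicting the choice of $p_i$.

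For $|p_i|\to\infty$ I recenter: the translated varifolds $\tilde\Sigma_i:=\Sigma_i-p_i$ still have entropy $\leq\lambda_0$ by translation invariance and inherit the same curvature bounds on $B_{R_i-|p_i|}(0)\supset B_3(0)$. Extract a pointed smooth subsequential limit $(\tilde\Sigma_\infty,0)$. The shrinker equation rewrites on $\tilde\Sigma_i$ as $H(y)=\tfrac{1}{2}\langle y+p_i,\nn(y)\rangle$, whose left side is uniformly bounded by $\sqrt{n}\,C_0$; dividing by $|p_i|$ then forces $\langle v,\nn_\infty\rangle\equiv 0$ for $v:=\lim p_i/|p_i|$, so $\tilde\Sigma_\infty$ splits isometrically as $\Sigma'\times\RR v$. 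Combined with the near-origin compactness $\Sigma_i\to\SS^k\times\RR^{n-k}$: since the sphere factor has bounded radius but $|p_i|\to\infty$, the direction $v$ must lie in the axis $\RR^{n-k}$, and translation by a vector asymptotic to $|p_i|v\in\RR^{n-k}$ sends the cylinder back to itself. This identifies $\tilde\Sigma_\infty=\SS^k\times\RR^{n-k}$, and the cylindrical values at $0\in\tilde\Sigma_\infty$ again produce $H>1/4$ and $|A|<2$, contradicting the choice of $p_i$.

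The main obstacle I anticipate is precisely this identification step in the unbounded case: the translated limit $\tilde\Sigma_\infty$ satisfies only a degenerate limit of the shrinker equation rather than the shrinker equation itself, so Theorem~\ref{t:huisken} does not apply to $\tilde\Sigma_\infty$ directly, and one must show separately that $\tilde\Sigma_\infty$ coincides with the cylinder produced by the compactness argument at the origin. The essential input for this is the uniform $|A|\leq C_0$ bound on the \emph{whole} of $B_{R_i}$, which together with the splitting direction $v$ propagates the cylindrical structure from the origin out to $p_i$ along $\Sigma_i$; once this is in place the remaining contradictions are automatic.
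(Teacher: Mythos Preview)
Your plan works when the bad points $p_i$ stay in a fixed compact set: that case is exactly Lemma~\ref{l:cpns} plus Theorem~\ref{t:huisken}, and indeed the paper uses this same compactness to pin down the cylindrical picture on the fixed ball $B_{5\sqrt{2n}}$. The difficulty is entirely in the case $|p_i|\to\infty$, and there your argument has a genuine gap that you yourself flag but do not close.

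The translated limit $\tilde\Sigma_\infty$ is not a shrinker, and your identification of it with $\SS^k\times\RR^{n-k}$ is circular. You only know $\Sigma_i$ is close to the cylinder on a \emph{fixed} compact set near the origin; nothing forces $\Sigma_i$ to be close to the cylinder near $p_i$, so translating by $p_i$ and appealing to the translation-invariance of the cylinder says nothing about $\tilde\Sigma_\infty$. Nor does the splitting $\tilde\Sigma_\infty=\Sigma'\times\RR v$ help: writing $p_i=a_iv+q_i$ with $a_i\to\infty$, the term $a_i\langle v,\nn_i\rangle$ in the translated shrinker equation is an $\infty\cdot 0$ indeterminate, so $\Sigma'$ inherits no classifiable equation and could, for instance, be a patch of a very large sphere---perfectly consistent with $H(p_i)<1/4$. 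Your proposed fix, that the bound $|A|\le C_0$ ``propagates the cylindrical structure from the origin out to $p_i$,'' is precisely the content of the proposition; a curvature bound alone does not prevent $\Sigma_i$ from drifting away from the cylinder over a distance of order $R_i$.

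The paper supplies the missing propagation mechanism, and it is quantitative rather than soft. The identity $\cL_{H^2}(A/H)=0$ (Proposition~\ref{p:eqnsg}), a cutoff energy estimate (Proposition~\ref{p:effective}), and elliptic theory (Corollary~\ref{c:taubound}) give $|\nabla\tau|+|\nabla^2\tau|\le C R^{n+1}\,\e^{-R/8}$ on $B_{R-2}$, where $\tau=A/H$. Via the Gauss equation (Corollary~\ref{c:lowev}) this forces the small eigenvalues of $\tau$ to be exponentially small and the corresponding eigendirections to be nearly Euclidean-parallel; flowing along these directions then carries the sharp bounds $H\ge 1/2$, $|A|^2\le 3/4$ from $B_{5\sqrt{2n}}$ out to all of $B_{R-3}$, with errors that are polynomial in $R$ times $\e^{-R/8}$. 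The point is that the hypothesis $H\ge\delta_0$ is used through the PDE for $A/H$, not merely to invoke Huisken's classification in a limit.
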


When we apply these iteratively, it will be important that Proposition
\ref{p:step1} extends the scale of the ``cylindrical region'' by a
factor greater than one, while Proposition \ref{p:step2} only forces
one to come in by a constant amount to get the improvement.  This
makes the iteration work as long as the initial scale is large enough.

\subsection{The proof of the rigidity theorem}
 
 We will see that the main theorem follows from the following proposition, where we also assume a positive
 lower bound for $H$ and an upper bound for the entropy that is less than two.
 
 \begin{Pro}		\label{p:uniq2}
Suppose that $\Sigma$ satisfies the hypotheses of  Theorem \ref{t:uniq1}.
If, in addition, we have the stronger assumptions that $\lambda_0 < 2$
and $H \geq \delta > 0$ on $B_{R-1}$ where $R=R(n, \lambda_0 , C , \delta)$,
then   $\Sigma$ is a generalized cylinder $\SS^k \times \RR^{n-k}$ for some $k \leq n$.
\end{Pro}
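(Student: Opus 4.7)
The proof plan is to apply Propositions \ref{p:step1} and \ref{p:step2} in alternation to push the ``cylindrical region'' out to infinity, and then to invoke Huisken's classification (Theorem \ref{t:huisken}). The one piece of setup needed is an initial bootstrap: the hypotheses give $H \geq \delta$ and $|A| \leq C$ on $B_{R-1}$, whereas the iteration wants the canonical bounds $H \geq 1/4$ and $|A| \leq 2$. Since Proposition \ref{p:step2} is stated for arbitrary $\delta_0 > 0$ and $C_0$, I would first apply it with $(\delta_0, C_0) := (\delta, C)$ to deduce that $H \geq 1/4$ and $|A| \leq 2$ on $B_{R-4} \cap \Sigma$, provided $R - 1 \geq R_1(n, \lambda_0, \delta, C)$.

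From this starting scale $r_0 := R - 4$, I set up the recursion $r_{k+1} := (1+\theta)\, r_k - 3$. Inductively, if $\Sigma \cap B_{r_k}$ is smooth with $H \geq 1/4$ and $|A| \leq 2$, then Proposition \ref{p:step1} (whose hypothesis $\lambda_0 < 2$ is in force) enlarges this region to $B_{(1+\theta) r_k}$ at the cost of weakening the bounds to $H \geq \delta_0$ and $|A| \leq C_0$; Proposition \ref{p:step2}, applied now with the specific constants $\delta_0, C_0$ coming from Proposition \ref{p:step1}, restores the bounds $H \geq 1/4$ and $|A| \leq 2$ on $B_{r_{k+1}}$. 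The recursion has unique fixed point $3/\theta$, and $r_k \to \infty$ whenever $r_0 > 3/\theta$, so by choosing $R = R(n,\lambda_0,C,\delta)$ large enough that $R - 4$ exceeds $3/\theta$, the threshold $R_0$ of Proposition \ref{p:step1}, and $R_1(n,\lambda_0,\delta_0,C_0)$, the iteration produces smoothness and $H \geq 1/4$ on every bounded subset of $\Sigma$.

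At this point $\Sigma$ is a complete smooth embedded shrinker with $H \geq 0$, and the entropy bound $\lambda(\Sigma) \leq \lambda_0$ gives polynomial volume growth, so Theorem \ref{t:huisken} forces $\Sigma = \SS^k \times \RR^{n-k}$. The elementary iteration above is really only bookkeeping; the substance of the argument lies in the two propositions themselves, and I would expect the main obstacle to be Proposition \ref{p:step1}, since it is the step that turns quantitative cylindrical behaviour on a given ball into cylindrical behaviour on a genuinely larger ball\,---\,this is where the entropy gap $\lambda_0 < 2$, the shrinker equation \eqref{e:ss}, and the spectral properties of the operators $\cL$ and $L$ on the model cylinder should do the real work. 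A minor additional point is that outside the initial ball $\Sigma$ is only assumed $F$-stationary, but the bound $|A| \leq 2$ obtained at each step yields smoothness on the next ball by standard regularity for $F$-stationary varifolds with bounded second fundamental form.
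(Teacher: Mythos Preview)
Your proof is correct and is exactly the iteration the paper has in mind; the paper's own proof is the single sentence ``This follows by applying first Proposition~\ref{p:step1} then Proposition~\ref{p:step2} and repeating this,'' and you have simply written out the bookkeeping (initial bootstrap via Proposition~\ref{p:step2}, the recursion $r_{k+1}=(1+\theta)r_k-3$, and the appeal to Theorem~\ref{t:huisken}) in full. One small remark on your closing commentary: the operators $\cL$ and $L$ and the Simons-type identities actually enter in the proof of Proposition~\ref{p:step2} (the improvement step), while Proposition~\ref{p:step1} is proven via Huisken monotonicity together with Allard/Brakke/White regularity---but this doesn't affect the correctness of your argument for Proposition~\ref{p:uniq2}.
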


\begin{proof}[Proof of Proposition \ref{p:uniq2} using Propositions
  \ref{p:step1} and \ref{p:step2}]
  This follows by applying first Proposition \ref{p:step1} then
  Proposition \ref{p:step2} and repeating this.
\end{proof}

We will use the following elementary lemma in the proof of the main theorem.

\begin{Lem} \label{l:elem} 
There exists $C$ depending only on $n$ such
  that if $R>0$ and $\mu$ is a measure with $\lambda(\mu)\le\lambda_0$,
  then
  \begin{align} \label{e:elem1}
    F(\mu\lfloor(\RR^{n+1}\setminus B_R))\le C \e^{-R^2/8}\lambda_0 \, .
  \end{align} 
  In particular, if $\mu$ is the mass measure of an $F$-stationary
  varifold $\Sigma$, then
  \begin{align} \label{e:elem2}
    \left| \lambda (\Sigma) - F(\Sigma\cap B_R)\right| \leq C
      \e^{-R^2/8}\lambda_0 \, .
  \end{align}
\end{Lem}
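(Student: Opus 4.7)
The plan is to prove the first inequality by a standard Gaussian tail estimate on annuli outside $B_R$, and then deduce the second as an immediate consequence. The key mechanism is that the entropy bound $\lambda(\mu) \le \lambda_0$ already contains polynomial volume growth; combining this with the decay of the weight $\e^{-|x|^2/4}$ on $\RR^{n+1} \setminus B_R$ yields the claimed exponential smallness.

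First I would extract a polynomial volume bound. Applying the hypothesis $F_{x_0, t_0}(\mu) \le \lambda_0$ at an arbitrary center $x_0$ and scale $t_0 = r^2$, and restricting the Gaussian integral to $B_r(x_0)$ where the weight is bounded below by $\e^{-1/4}$, gives
\begin{equation*}
	\mu(B_r(x_0)) \le C_n \, r^n \, \lambda_0
\end{equation*}
for every $x_0 \in \RR^{n+1}$ and every $r > 0$, where $C_n$ depends only on $n$. This is the only place the entropy hypothesis enters.

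Next I would partition $\RR^{n+1} \setminus B_R$ into unit annuli $A_k = B_{R+k+1} \setminus B_{R+k}$ for $k \ge 0$. On $A_k$ the weight satisfies $\e^{-|x|^2/4} \le \e^{-(R+k)^2/4}$, and the volume bound gives $\mu(A_k) \le C_n (R+k+1)^n \lambda_0$. Summing and using $(R+k)^2 \ge R^2 + 2Rk$ to factor out $\e^{-R^2/4}$, the residual series in $k$ is crushed by the factor $\e^{-Rk/2}$ and is controlled by a constant multiple of $R^n$ for $R \ge 1$. Thus the exterior Gaussian mass is at most a constant times $R^n \, \e^{-R^2/4} \, \lambda_0$; since $R^n \, \e^{-R^2/8}$ is uniformly bounded in $R$, this absorbs into $C \, \e^{-R^2/8} \, \lambda_0$, and for bounded $R$ the inequality is trivial after enlarging $C$.

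For the second inequality, recall (from Section $7$ of \cite{CM1}, as the paper notes) that when $\Sigma$ is $F$-stationary the supremum defining entropy is attained at the standard Gaussian, so $\lambda(\Sigma) = F(\Sigma)$. Splitting $F(\Sigma) = F(\Sigma \cap B_R) + F(\mu \lfloor (\RR^{n+1} \setminus B_R))$ and applying the first inequality to the exterior piece yields the absolute value bound, the difference being automatically nonnegative. The whole argument is routine Gaussian bookkeeping; the only thing that requires a moment of care is verifying that the decay $\e^{-R^2/4}$ coming out of the annular sum, together with polynomial factors of order $R^n$, is in fact dominated by $\e^{-R^2/8}$, which would be the main obstacle were the stated rate any sharper than it is.
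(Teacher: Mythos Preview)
Your argument is correct, but it takes a different and somewhat longer route than the paper's.

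The paper avoids the annular decomposition and the intermediate polynomial volume bound entirely. Instead it compares the Gaussian weight at scale $1$ directly to the weight at scale $2$: writing $\rho_{x,t}(z)=(4\pi t)^{-n/2}\e^{-|x-z|^2/4t}$, one has
\[
F_{0,1}\bigl(\mu\lfloor(\RR^{n+1}\setminus B_R)\bigr)\le\Bigl(\sup_{|z|\ge R}\frac{\rho_{0,1}(z)}{\rho_{0,2}(z)}\Bigr)\,F_{0,2}(\mu)\le 2^{n/2}\,\e^{-R^2/8}\,\lambda_0,
\]
since the ratio $\rho_{0,1}/\rho_{0,2}$ equals $2^{n/2}\e^{-|z|^2/8}$ and the entropy bound controls $F_{0,2}(\mu)$ directly. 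This one-line trick yields the exact constant $C=2^{n/2}$ and the exact exponent $R^2/8$ without any summation or absorption of polynomial factors. Your approach, by contrast, first converts the entropy bound into polynomial volume growth and then sums a Gaussian tail over annuli; this is more elementary and perhaps more transparent about where the growth control comes from, but it produces an intermediate $R^n\e^{-R^2/4}$ that you then have to re-absorb into $\e^{-R^2/8}$. Both are fine; the paper's version is slicker because it exploits the full family of Gaussian functionals built into the definition of entropy rather than just the small-scale ones.
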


\begin{proof}
Write $\rho_{x,t}(z):=\e^{-|x-z|^2/4t}/( 4\pi t)^{n/2}$. Compute
\begin{align*}
  F_{0,1}(\mu\lfloor(\RR^{n+1}\setminus B_R))\le\sup_{|z|\geq
    R}\frac{\rho_{0,1}(z)}{\rho_{0,2}(z)}\,\,F_{0,2}(\Sigma\setminus
  B_R)\leq 2^{n/2}\e^{-R^2/8}\lambda_0\,.
\end{align*}
When $\Sigma$ is $F$-stationary, \cite{CM1} gives that $\lambda
(\Sigma) = F_{0,1} (\Sigma)$, yielding the second statement. 
\end{proof}
 
We will also use the following compactness theorem for sequences of $F$-stationary varifolds:

\begin{Lem} \label{l:cpns} 
  Let $\Sigma_i \subset \RR^{n+1}$ be a
  sequence of $F$-stationary varifolds with $\lambda (\Sigma_i) \leq
  \lambda_0$ and
  \begin{align}
    B_{R_i} \cap \Sigma_i {\text{ is smooth with }}   \, |A| \leq C \, ,
  \end{align}
  where $R_i \to \infty$.  Then there exists a subsequence $\Sigma_i'$
  that converges smoothly and with multiplicity one to a complete
  embedded shrinker $\Sigma$ with $|A| \leq C$ and
  \begin{align} \label{e:lamcv} \lim_{i \to \infty} \, \lambda
    (\Sigma_i') = \lambda (\Sigma) \, .
  \end{align}
\end{Lem}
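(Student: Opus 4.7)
The plan is to combine a standard local smooth compactness argument on compact sets with the exponential decay of the Gaussian weight at infinity (Lemma \ref{l:elem}). First I would establish local smooth convergence. For any fixed $R > 0$, once $R_i > R$ the hypersurface $B_R \cap \Sigma_i$ is smooth with $|A| \leq C$, so by the standard tubular-neighborhood/graph estimate for smooth embedded hypersurfaces with bounded second fundamental form, each point of $\Sigma_i \cap B_R$ lies in a uniform-size ball in which $\Sigma_i$ is the graph of a function over its tangent plane with $C^2$-norm controlled by $C$. The shrinker equation $H = \langle x, \nn\rangle/2$ is a quasilinear elliptic PDE in this graph function, with coefficients bounded on $B_R$; bootstrapping with Schauder estimates then gives uniform $C^{k,\alpha}$ bounds on compact subsets of $B_R$ for every $k$. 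A diagonal extraction over $R = 1,2,3,\ldots$ produces a subsequence $\Sigma_i'$ converging in $C^{k,\alpha}_{\mathrm{loc}}$ to a smooth complete embedded hypersurface $\Sigma \subset \RR^{n+1}$ that satisfies \eqref{e:ss} and has $|A| \leq C$.

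The delicate step is showing that the convergence has multiplicity one, i.e., ruling out that distinct sheets of $\Sigma_i'$ collapse onto a single sheet of $\Sigma$. Any nontrivial shrinker has $\lambda \geq 1$ by Huisken's monotonicity, so the varifold multiplicity is bounded by $\lambda_0$; passing to a further subsequence one may assume it is a constant integer $m$ on each component. If $m \geq 2$, then near each $p \in \Sigma$ the surfaces $\Sigma_i'$ consist of $m$ disjoint graphs $u_i^1 < \cdots < u_i^m$ over $\Sigma$ via the normal exponential map, each satisfying the shrinker equation and converging uniformly to $0$. Normalizing the separation $w_i = (u_i^2 - u_i^1)/\|u_i^2 - u_i^1\|_{L^\infty}$ yields in the limit a nontrivial nonnegative solution on $\Sigma$ of the linearized shrinker equation $Lw = 0$. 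Combining this positive Jacobi field with the strong maximum principle and the integrability of the Gaussian weight against $|A|^2 + 1/2$ along $\Sigma$ should lead to a contradiction, forcing $m = 1$. This is the step I expect to require the most care.

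Finally, for the entropy convergence I would invoke Lemma \ref{l:elem} on both sides. Since $\Sigma_i'$ converges smoothly to $\Sigma$ on $B_R$, we have $F(\Sigma_i' \cap B_R) \to F(\Sigma \cap B_R)$. Lemma \ref{l:elem} bounds both $|\lambda(\Sigma_i') - F(\Sigma_i' \cap B_R)|$ and $|\lambda(\Sigma) - F(\Sigma \cap B_R)|$ by $C\e^{-R^2/8}\lambda_0$. Sending first $i \to \infty$ and then $R \to \infty$ yields $\lim_i \lambda(\Sigma_i') = \lambda(\Sigma)$, completing the proof.
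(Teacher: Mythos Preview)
Your approach matches the paper's: local $C^{k,\alpha}$ compactness from the $|A|$ bound plus elliptic estimates for the shrinker equation and a diagonal argument, then a multiplicity-one argument via a limiting Jacobi field, then Lemma~\ref{l:elem} for the entropy convergence. The paper handles the multiplicity-one step by citing proposition~3.2 of \cite{CM2} (higher multiplicity yields $L$-stability of the limit) together with the result from \cite{CM1} that no complete shrinker with polynomial volume growth is $L$-stable. Your sketch of this step is in the same spirit but is incomplete on two points you should be aware of: first, you need a Harnack inequality for the linearized operator on $\Sigma$ to guarantee that the normalized differences $w_i$ converge to a \emph{nontrivial} limit on compact sets (otherwise the supremum in your $L^{\infty}$ normalization could drift to infinity and the local limit could vanish); second, the contradiction you allude to is precisely the $L$-instability theorem of \cite{CM1} (one produces an explicit destabilizing direction, e.g.\ via $H$ or a translation, and uses the Gaussian volume growth to justify the cutoff), not a direct consequence of ``integrability of the Gaussian weight against $|A|^2+1/2$'' alone.
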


\begin{proof}
  All of this discussion is done in great detail in \cite{CM2} for a
  similar case, so we will sketch the argument here.

  Combining the a priori $|A|$ bound with elliptic estimates for the
  shrinker equation, the Arzela-Ascoli theorem, the strong maximum
  principle, and a diagonal argument, we get that a subsequence
  converges in $C^{2,\alpha}$ with finite multiplicity to a smooth
  embedded shrinker $\Sigma$ with $\lambda (\Sigma) \leq \lambda_0$.

  We argue as in \cite{CM2} to see why the multiplicity must be one.
  Namely, proposition $3.2$ in \cite{CM2} gives that $\Sigma$ is
  $L$-stable if the multiplicity is greater than
  one.{\footnote{Proposition $3.2$ in \cite{CM2} is stated for
      surfaces (i.e., $n=2$) but the proof applies in arbitrary
      dimension.}}  On the other hand, by \cite{CM1} (see theorem
  $0.5$ in \cite{CM2}), there are no complete $L$-stable shrinkers
  with polynomial volume growth, so the limit must have been
  multiplicity one.

  Finally, \eqr{e:lamcv} now follows from \eqr{e:elem2}.
\end{proof}

We will now give the proof of the main rigidity theorem. Though nearly
all of the work involves smooth computations, the argument actually
proves the general version where $\Sigma$ is $F$-stationary and need
not be smooth everywhere.  The proof uses Proposition \ref{p:uniq2}
which relies on the two key ingredients, Propositions \ref{p:step1}
and \ref{p:step2}, which will be proven in the next two sections.

\begin{proof}[Proof of Theorem \ref{t:uniq1} using Proposition
  \ref{p:uniq2}]
  By the compactness of Lemma \ref{l:cpns} and the classification of
  complete embedded mean convex shrinkers in \cite{H1} and \cite{H2},
  we can assume that $\Sigma$ is smoothly close to $\SS^k \times
  \RR^{n-k}$ in $B_{R_1}$ for some $k\in\{0,\ldots,n\}$, where $R_1$
  can be taken as large as we wish. Note that $F_{0,1} ( \SS^k \times
  \RR^{n-k}) \leq \sqrt {2\pi/\e} \approx 1.52$ by \cite{S}.  Then,
  using the closeness to the cylinder and \eqr{e:elem2}, we can arrange that
  \begin{align}
    \lambda (\Sigma) < 1.6 \, .
  \end{align}
  We now consider two cases depending on $k$:
  \begin{itemize}
  \item When $k=0$, then \eqr{e:elem2} allows us to make $\lambda
    (\Sigma)$ as close to $1$ as we wish, so Brakke's theorem
    (\cite[Thm 6.11]{B}; cf.\ \cite{W2}) gives that $\Sigma$ is a hyperplane.
  \item When $k>1$, we get that $H$ is approximately $\sqrt{k/2}$ on
    $B_{R_1} \cap \Sigma$ and the theorem follows from Proposition
    \ref{p:uniq2}.
  \end{itemize}
\end{proof}

\subsection{The proofs of Theorem \ref{t:main} and Corollary
\ref{c:axis}} \label{ss:proofs}

Theorem \ref{t:main} will follow from a slightly more general form of
the rigidity theorem for the cylinder and a compactness result for
mean curvature flows.

To make this precise, we will define a distance $d_V$ between Radon
measures on $\RR^{n+1}$ with the Gaussian weight.  Namely, let $f_n$
be a countable dense subset of the unit ball in the space of continuous
functions with compact support and and define
\begin{align}
  d_V (\mu_1 , \mu_2) = \sum_k \, 2^{-k} \, \left| \int f_k \,
    \e^{-|x|^2/4} \, d\mu_1 - \int f_k \, \e^{-|x|^2/4} \,
    d\mu_2 \right| \, .
\end{align}
It is then easy to see that $d_V$ is a metric on the space of Radon
measures satisfying $F(\mu)<\infty$ and $\mu_j \to
\mu$ in the standard weak topology if and only if $d_V (\mu_j , \mu) \to
0$.
  
\begin{Cor} \label{c:uniq2} Given $n$ and $\lambda_0$, there exists
  $\kappa > 0$ so that if $\Sigma^n \subset \RR^{n+1}$ is an
  $F$-stationary varifold with entropy $\lambda (\Sigma) \leq
  \lambda_0$ and $ d_V \left(\Sigma , \SS^k \times \RR^{n-k} \right)
  \leq \kappa$, then $\Sigma$ is isometric to $\SS^k \times \RR^{n-k}$.
\end{Cor}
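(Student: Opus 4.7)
My plan is to argue by contradiction using Theorem \ref{t:uniq1} together with a standard compactness/regularity argument. Suppose the corollary fails for some $n$ and $\lambda_0$. Then there is a sequence of $F$-stationary varifolds $\Sigma_i$ with $\lambda(\Sigma_i) \leq \lambda_0$ and $d_V(\Sigma_i, \SS^k \times \RR^{n-k}) \to 0$, none of which is isometric to $\SS^k \times \RR^{n-k}$.

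First, I would upgrade the $d_V$-convergence to varifold convergence. The bound $\lambda(\Sigma_i) \leq \lambda_0$ together with Lemma \ref{l:elem} gives uniform mass bounds on every compact set, so after passing to a subsequence the $\Sigma_i$ converge as Radon measures and as varifolds; the limit is forced to be $\SS^k \times \RR^{n-k}$ with multiplicity one, because $F(\Sigma_i) \to F(\SS^k \times \RR^{n-k})$ (again via Lemma \ref{l:elem}) and higher multiplicity would at least double this quantity. Since the limit is smooth everywhere and the $\Sigma_i$ are $F$-stationary, Allard's regularity theorem (applied in the weighted ambient metric that makes $F$-stationary varifolds into stationary varifolds) gives that on every ball $B_R$, for $i$ large enough, $\Sigma_i \cap B_R$ is a smooth multiplicity-one graph over $\SS^k \times \RR^{n-k} \cap B_R$ converging in $C^{2,\alpha}$.

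Next I would split on $k$. If $k \geq 1$, the cylinder satisfies $H = \sqrt{k/2} \geq \sqrt{1/2} > 1/4$ and $|A|^2 = k/2$. By the $C^{2,\alpha}$ convergence, for any fixed $R$ and all $i$ sufficiently large, $\Sigma_i$ is smooth in $B_R$ with $H \geq 1/4$ and $|A| \leq \sqrt{n}$. Choosing $R = R(n,\lambda_0,\sqrt{n})$ from Theorem \ref{t:uniq1} and then $i$ large enough, Theorem \ref{t:uniq1} forces $\Sigma_i$ to be some generalized cylinder $\SS^{k'} \times \RR^{n-k'}$. The convergence $d_V(\Sigma_i, \SS^k \times \RR^{n-k}) \to 0$ then pins down $k' = k$ and identifies the axis, so $\Sigma_i$ is isometric to $\SS^k \times \RR^{n-k}$, a contradiction. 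If $k = 0$, then $F(\Sigma_i) \to F(\RR^n) = 1$ by Lemma \ref{l:elem}, so $\lambda(\Sigma_i) \to 1$ and Brakke's regularity theorem (\cite[Thm 6.11]{B}) forces $\Sigma_i$ to be a hyperplane for large $i$, again contradicting our assumption.

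The main obstacle is the regularity/compactness step: promoting the weak $d_V$-closeness hypothesis, which carries no a priori smoothness information, to the smooth closeness on arbitrarily large balls needed by Theorem \ref{t:uniq1}. This rests on applying Allard's regularity theorem to the $F$-stationary varifolds at the smooth limit $\SS^k \times \RR^{n-k}$, together with care about multiplicity, which is ruled out by the convergence of $F$-values. Once this is in hand, everything reduces to Theorem \ref{t:uniq1} (or Brakke in the hyperplane case).
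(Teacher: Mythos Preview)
Your proposal is correct and takes essentially the same approach as the paper: use Allard's regularity theorem to upgrade $d_V$-closeness to smooth closeness on a large ball, then invoke Theorem \ref{t:uniq1}. The paper states this in one line without the contradiction/sequence framework, and it does not separate out the $k=0$ case here (that split already occurs inside the proof of Theorem \ref{t:uniq1}), but the mathematical content is the same.
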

  
Here $\SS^k \times \RR^{n-k}$ has multiplicity one and the $\SS^k$ factor 
is centered at $0$ with radius $\sqrt{2k}$.
  
\begin{proof}[Proof of Corollary \ref{c:uniq2}]
  This follows immediately from Theorem \ref{t:uniq1} since
  measure-theoretic closeness to a smooth shrinker implies smooth
  closeness on compact sets by Allard's regularity theorem \cite{Al}.
  (Note that we need the version of the Theorem that assumes
  smoothness only in $B_R$.)
\end{proof}

The rescaled mean curvature flow is obtained
from a MCF by setting $N_s=\frac{1}{\sqrt{t_0-t}}(M_t-x_0)$,
$s=-\log(t_0-t)$, $t<t_0$, where $(x_0,t_0)$ is some fixed point in
spacetime. It satisfies the equation $(\partial_s x)^\perp= -H\nu +
x/2$.

\begin{Pro} \label{p:compact} Given $n$, $\lambda_0$ and $\eps >
  0$, there exists $\delta > 0$ so that if $N_s \subset \RR^{n+1}$ is
  a rescaled MCF of integral varifolds for $s \in [0,1]$ with $\lambda
  (N_0) \leq \lambda_0$ and
  \begin{align} \label{e:compact}
     F(N_0) - F(N_1) \leq \delta \, ,
  \end{align}
  then there is an $F$-stationary varifold $\Sigma$ with
  $\lambda(\Sigma)\leq\lambda_0$ and $d_V (\Sigma , N_s) \leq \eps$ for
  all $s\in[0,1]$.
\end{Pro}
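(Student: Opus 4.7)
The strategy is a compactness argument by contradiction, driven by Huisken's monotonicity formula for the rescaled flow. Recall that along a rescaled MCF,
\begin{align*}
  \frac{d}{ds} F(N_s) = - (4\pi)^{-n/2} \int_{N_s} \Bigl| H - \frac{\langle x, \nn \rangle}{2} \Bigr|^2 \e^{-|x|^2/4} \, d\mu \, ,
\end{align*}
so $F$ is monotone non-increasing and the hypothesis $F(N_0) - F(N_1) \le \delta$ tightly controls the time-integrated $L^2$-Gaussian norm of the shrinker defect $H - \langle x,\nn\rangle/2$; in particular, this bounds the $L^2$-Gaussian norm of the normal velocity of the rescaled flow.

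Suppose the conclusion fails. Then there exist $\eps_0 > 0$, a sequence $\delta_i \to 0$, rescaled MCFs $\{N_s^i\}_{s \in [0,1]}$ with $\lambda(N_0^i) \leq \lambda_0$ and $F(N_0^i) - F(N_1^i) \leq \delta_i$, and times $s_i \in [0,1]$ such that $d_V(\Sigma, N_{s_i}^i) \geq \eps_0$ for every $F$-stationary varifold $\Sigma$ with $\lambda(\Sigma) \leq \lambda_0$. The entropy bound yields uniform local mass bounds, so by Ilmanen's compactness for integral Brakke flows (applied to the parabolic rescaling viewpoint) we can pass to a subsequence along which $s_i \to s_\infty$ and $N_s^i$ converges, as Radon measures with Gaussian weight, to a rescaled Brakke flow $N_s^\infty$ on $[0,1]$ with $\lambda(N_s^\infty) \leq \lambda_0$. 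Lower semicontinuity of the defect integral combined with $\delta_i \to 0$ gives $F(N_0^\infty) = F(N_1^\infty)$, and monotonicity along the limit flow then forces $F(N_s^\infty) \equiv \text{const}$, i.e.\ the shrinker defect vanishes a.e.\ in $s$ and each time slice is $F$-stationary. Since the normal velocity of the rescaled flow is exactly the defect, the limit is static: $N_s^\infty \equiv \Sigma$ for a single $F$-stationary varifold.

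To close the contradiction we need $d_V(N_{s_i}^i, \Sigma) \to 0$, not merely weak convergence at a.e.\ time. This requires a quantitative equicontinuity-in-$s$ statement: for each test function $f_k$ (Lipschitz, compactly supported), estimating the time derivative of $\int f_k \e^{-|x|^2/4} \, d\mu_{N_s^i}$ by Cauchy-Schwarz against the $L^2$-Gaussian defect gives oscillation in $s$ bounded by $C(f_k) \sqrt{F(N_0^i) - F(N_1^i)} \leq C(f_k) \sqrt{\delta_i}$. Summing the $f_k$ contributions in the definition of $d_V$ and combining with weak convergence at almost every $s$ yields $d_V(N_{s_i}^i, \Sigma) \to 0$, contradicting the choice of $s_i$.

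The main obstacle is this last step: converting the integrated defect bound into uniform-in-$s$ closeness in the $d_V$ metric. The weak compactness theory gives convergence only at a.e.\ time a priori, and the times $s_i$ are arbitrary, so the equicontinuity estimate via Cauchy-Schwarz on the normal velocity is essential. One must also verify that the limit Brakke flow is an honest rescaled Brakke flow to which Huisken's monotonicity applies with equality, which is the standard but nontrivial matter of preserving the flow equation under Ilmanen's compactness.
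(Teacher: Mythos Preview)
Your argument is essentially the same as the paper's: contradiction, Ilmanen's Brakke compactness to extract a limiting rescaled flow with $F(N_0)=F(N_1)$, and hence a static $F$-stationary limit $\Sigma$. Where the paper closes by observing in one line that ``since there is no mass loss, every sequence of time slices converges to $\Sigma$ as Radon measures,'' you unpack this into an explicit equicontinuity-in-$s$ estimate via Cauchy--Schwarz on the defect; that extra detail is correct (once you note $f_k$ has compact support, so the $H$ appearing in the first-variation factor can be absorbed as $(H-\langle x,\nn\rangle/2)+\langle x,\nn\rangle/2$) and is simply a more hands-on version of the same step.
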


\begin{proof}
  We will argue as in the existence of tangent flows in \cite{I1}, \cite{W3}.  
  Suppose therefore that $N^i_s$ is a sequence of rescaled MCFs 
  satisfying \eqr{e:compact} with $\delta= 1/i$, but where  
  \begin{align} \label{e:fails} d_V (\Sigma , N^i_{s_i}) \geq
    \eps>0 {\text{ for some $s_i$ and every such }} \Sigma \, .
  \end{align}
  The Brakke compactness of Ilmanen (Lemma $7.1$ in \cite{I2}) gives a
  subsequence of the $N^i_s$'s converges to a limiting Brakke flow
  $N_s$ with $\lambda (N_0) \leq \lambda_0$ and $F(N_0) = F(N_1)$.  In
  particular, $N_s$ is a static solution of the rescaled MCF (i.e.,
  each $N_s$ is the same $F$-stationary varifold $\Sigma$). Since
  there is no mass loss, every sequence of time slices converges to
  $\Sigma$ as Radon measures, contradicting \eqr{e:fails}.
\end{proof}

\begin{proof}[Proof of Theorem \ref{t:main}]
  Let $\kappa > 0$ be given by Corollary \ref{c:uniq2}, and let
  $\delta>0$ by given by Corollary \ref{c:uniq2} with $\eps$
  replaced by $\kappa/6$.

  Let $(x_0,t_0)$ be a singular point with at least one tangent flow
  isometric to a cylinder $\SS^k\times\R^{n-k}$. Then there is a time
  $t<t_0$ such that $\frac{1}{\sqrt{t_0-t}}(M_t-x_0)$ is $C^2$ close
  to a cylinder in a large ball. By \eqr{e:elem1}, the gaussian
  integral is predictably small outside of this ball, so there is a
  space-time neighborhood $U$ of $(x_0,t_0)$ such that for any
  $(x_1,t_1) \in U$,
  \begin{align*} %\Label{e:close}
    d_V \left( \frac{1}{\sqrt{t_1-t}} \left( M_t - x_1 \right) ,
      O(\SS^k \times \RR^{n-k}) \right) \leq \frac{\kappa}{6} \, ,
  \end{align*}
  for some $O\in SO(n+1)$, and 
  \begin{align*}
    F \left(\frac{1}{\sqrt{t_1-t}}(M_t - x_1)\right) \leq \lambda_k +
    \frac{\delta}{2}\,,
  \end{align*}
  where $\lambda_k \equiv F (\SS^k \times \RR^{n-k})$.

  Now suppose that $(x_1,t_1)$ has entropy at least $\lambda_k -
  \delta/2$.  If $N_s$, $s\ge s_0$, is the rescaled MCF starting from
  $\frac{1}{\sqrt{t_1-t}}(M_t - x_1)$, then the total variation of
  $F(N_s)$ is at most $\delta$.  For each   integer $j \geq 0$, it follows by the choice of $\delta$
  that there is an $F$-stationary varifold $V_j$ so that 
  \begin{align}	\label{e:gonnatri}
      d_V (N_s , V_j) \leq \frac{\kappa}{6} {\text{ for every }} s \in [s_0 + j , s_0 + j +1] \, .
  \end{align}
  We have that $V_0$ is a generalized cylinder
  by Corollary \ref{c:uniq2}.  Furthermore, \eqr{e:gonnatri} and the triangle inequality
  implies that $d_V (V_j , V_{j+1}) \leq \kappa/3$, so every $V_j$ is a generalized cylinder,
  giving the theorem.
\end{proof}

\begin{proof}[Proof of Corollary \ref{c:axis}]
  The corollary follows immediately from Theorem \ref{t:main}.
\end{proof}

\section{Proof of Proposition \ref{p:step1}}		\label{s:step1}
 
In this section, we will prove Proposition \ref{p:step1}, which shows
that a curvature bound on a sufficiently large ball $B_R$ implies a
slightly worse bound on a larger ball $B_{(1+\theta)R}$ for some fixed
$\theta > 0$.  We will do this by using Brakke's theorem to prove
estimates in $B_R$ for the mean curvature flow $\Sigma_t = \sqrt{-t}
\, \Sigma$ associated to the shrinker.

\subsection{Applying Brakke to a self-shrinker}

We will use the following consequence of Allard's theorem, \cite{Al}:

\begin{Lem}	\label{l:allardreg}
Given $n$, there exists $\epsilon_A > 0$ so that if $\Sigma \subset \RR^{n+1}$ is an $F$-stationary varifold, $x_0 \in \Sigma$, and
there is some $\tau > 0$ so that
\begin{align}	\label{e:epsA}
	F_{x_0 , t_0} (\Sigma_0)  \leq 1 + \epsilon_A {\text{ for every }} t_0 \in (0, \tau) \, , 
\end{align}
then $\Sigma$ is smooth at $x_0$.
\end{Lem}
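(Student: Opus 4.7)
The plan is to deduce smoothness of $\Sigma$ at $x_0$ from Allard's regularity theorem. Allard requires two inputs: a small bound on the generalized mean curvature of $\Sigma$ in a small ball around $x_0$, and a mass ratio close to $1$ on that ball. I will extract both from the given data.

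For the mean curvature input, the $F$-stationarity hypothesis gives the shrinker equation $H = \langle x, \nn \rangle / 2$, so $|H| \leq (|x_0| + r)/2$ on $B_r(x_0)$. After rescaling $B_r(x_0)$ to the unit ball, the Allard-normalized mean curvature is of order $r$, as small as desired by taking $r$ small.

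For the mass ratio input, I first pass to the limit $t_0 \to 0^+$ in the Gaussian hypothesis. Using that each blowup of $\Sigma$ at $x_0$ is a stationary integral cone (existence being provided by the local bound on $|H|$) and that $F_{0,1}$ of such a cone equals its density at the vertex, I obtain $\Theta(\mu, x_0) \leq 1 + \epsilon_A$. Choosing $\epsilon_A < 1$ and invoking integer rectifiability together with $x_0 \in \Sigma$ then forces $\Theta(\mu, x_0) = 1$. The standard monotonicity formula for varifolds with bounded generalized mean curvature then gives $\mu(B_r(x_0))/(\omega_n r^n) \to 1$ as $r \to 0$, so the mass ratio lies within Allard's threshold for all sufficiently small $r$.

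Applying Allard at such a small scale yields that $\Sigma$ is a $C^{1,\alpha}$ graph over its tangent plane at $x_0$ in a smaller ball; elliptic regularity for the shrinker equation $H = \langle x, \nn \rangle / 2$ then bootstraps this to full smoothness. The main subtlety I expect is the passage from the Gaussian density bound (the hypothesis) to the Euclidean mass ratio bound (required by Allard): this requires both identifying $\lim_{t_0 \to 0^+} F_{x_0, t_0}(\Sigma)$ with $\Theta(\mu, x_0)$ for a non-smooth integer rectifiable $F$-stationary varifold, and invoking the monotonicity formula in the generalized-mean-curvature setting. Both are routine once one notes that $F$-stationarity forces $|H|$ to be locally bounded, which is precisely what unlocks the rest of Allard's machinery.
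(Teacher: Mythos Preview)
Your approach is correct and close in spirit to the paper's, though packaged slightly differently. The paper passes to a tangent cone $V$ at $x_0$, computes explicitly that for a cone $F_{0,1}(V)=\Vol(B_1\cap V)/\Vol(B_1\cap\RR^n)$, and applies Allard to $V$ to conclude that $V$ is a multiplicity-one hyperplane; smoothness of $\Sigma$ at $x_0$ then follows since $\Sigma$ is stationary for a conformal metric. You instead extract $\Theta(\mu,x_0)\leq 1+\epsilon_A$ from the same tangent-cone identity, convert it to a Euclidean mass-ratio bound via monotonicity, and apply Allard directly to $\Sigma$ using the locally bounded generalized mean curvature coming from the shrinker equation. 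Both routes hinge on the same identification of Gaussian and Euclidean density and the same endgame; the paper's phrasing via the cone is marginally cleaner because for a cone the mass ratio is constant in $r$, so no monotonicity argument is needed.

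One small imprecision in your write-up: the claim that ``integer rectifiability together with $x_0\in\Sigma$ forces $\Theta(\mu,x_0)=1$'' is not justified as stated. Integer rectifiability gives $\Theta\in\ZZ_{\geq 1}$ only $\mu$-almost everywhere; at an arbitrary support point the density need not be an integer (a stationary triple junction has density $3/2$ at the vertex). Fortunately this step is unnecessary: from $\Theta(\mu,x_0)\leq 1+\epsilon_A$ and the convergence of the mass ratio to $\Theta$ you already obtain $\mu(B_r(x_0))/(\omega_n r^n)\leq 1+2\epsilon_A$ for all sufficiently small $r$, which is precisely Allard's hypothesis once $\epsilon_A$ is chosen small enough. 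So simply drop the sentence asserting $\Theta=1$ and proceed directly from the inequality.
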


\begin{proof}
Since $\Sigma$ is $F$-stationary, it is stationary for a conformal metric.  Thus, by Allard's theorem,  it suffices to show that some tangent cone to $\Sigma$ at $x_0$ is a multiplicity one hyperplane.   

Any tangent cone $V$ must be a conical 
stationary integral varifold in $\RR^{n+1}$.  Since $V$ is a cone, we can compute $F(V)$ 
\begin{align}
	F(V) = (4\pi)^{ - \frac{n}{2} } \, \int_{0}^{\infty} 
	 \e^{ - \frac{s^2}{4} } \, s^{n-1} \, \Vol (\partial B_1 \cap V) \, ds 
	=\frac{ \Vol (\partial B_1 \cap V)}{\Vol (\partial B_1 \cap \RR^n)} =  \frac{ \Vol (  B_1 \cap V)}{\Vol ( B_1 \cap \RR^n)} \, .
\end{align}
Combining this with \eqr{e:epsA} implies that 
\begin{align}
 	\frac{ \Vol ( B_1 \cap V)}{\Vol ( B_1 \cap \RR^n)}  = F(V) \leq 1 + \epsilon_A \, .
\end{align}
Finally, Allard gives $\epsilon_A > 0$ so that if $V$ is any stationary varifold with 
$\frac{ \Vol ( B_1 \cap V)}{\Vol ( B_1 \cap \RR^n)}  \leq 1 + \epsilon_A$, then $V = \RR^n$ and has multiplicity one.

\end{proof}

\begin{Pro}	\label{p:brakke}
Given $\lambda_0 < 2$ and $C_1$, there exist $\theta > 0$ and $C_2$ 
so that if $\Sigma$ is $F$-stationary with
\begin{itemize}
\item $\lambda (\Sigma) \leq \lambda_0$,
\item $B_R \cap \Sigma$ is smooth and has $|A| \leq C_1$,
\end{itemize}
then $B_{(1+\theta)R-1/3} \cap \Sigma$ is smooth.  Furthermore, we get the curvature estimate
\begin{align}
    \sup_{B_{(1+\theta)R - 1/2} \cap \Sigma} |A| \leq C_2 \, .
\end{align}
\end{Pro}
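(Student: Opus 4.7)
The plan is to verify the hypothesis of Lemma \ref{l:allardreg} at each point $x_0\in\Sigma$ with $|x_0|\le (1+\theta)R-1/3$: it is enough to bound $F_{x_0,t_0}(\Sigma)\le 1+\epsilon_A$ for all sufficiently small $t_0>0$. I would obtain this from Huisken's monotonicity formula for the associated self-similar MCF $M_t=\sqrt{-t}\,\Sigma$, combined with an $F$-functional estimate exploiting the smoothness hypothesis in $B_R$. The framework stays at the varifold level throughout, matching the fact that $\Sigma$ is only assumed $F$-stationary.

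By scaling the hypothesis, $M_t\cap B_{\sqrt{-t}R}$ is smooth with $|A|_{M_t}\le C_1/\sqrt{-t}$ for every $t<0$. A direct change of variables using the self-similar structure yields
\[
\Theta_{(x_0,-1)}(r)\;=\;F_{x_0\sqrt{1-s},\,s}(\Sigma), \qquad s=\frac{r^2}{1+r^2}\in(0,1),
\]
and Huisken's monotonicity forces this quantity to be non-decreasing in $s$, with limit at $s\to 0$ equal to the varifold density of $\Sigma$ at $x_0$. So it suffices to exhibit a single witness scale $s_0\in(0,1)$ at which $F_{x_0\sqrt{1-s_0},s_0}(\Sigma)\le 1+\epsilon_A/2$; the shift $x_0\sqrt{1-s}\to x_0$ as $s\to 0$ is only an $O(|x_0|s)$ perturbation that is absorbed into the remaining slack.

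To produce such a witness, I take $s_0$ so that $y_0:=x_0\sqrt{1-s_0}$ lies at distance $1/6$ inside $B_R$, i.e.\ $|y_0|=R-1/6$. For $|x_0|\le (1+\theta)R-1/3$ and $R$ large this forces $s_0$ of order $\theta$. Decomposing
\[
F_{y_0,s_0}(\Sigma)=F_{y_0,s_0}(\Sigma\cap B_R)+F_{y_0,s_0}(\Sigma\setminus B_R),
\]
the far part is controlled by the Gaussian tail estimate of Lemma \ref{l:elem}, with separation $1/6$ and entropy bound $\lambda_0$; this quantity is exponentially small in $1/s_0$. For the near part, smoothness and the bound $|A|\le C_1$ on $\Sigma\cap B_R$ imply that on the scale $\sqrt{s_0}\sim\sqrt{\theta}\ll 1/C_1$ the surface is nearly flat and of multiplicity one, so the integral is close to the $F$-integral of a plane, which equals $1$. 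Choosing $\theta$ small depending on $\lambda_0$ and $C_1$ drives the total below $1+\epsilon_A/2$, so Lemma \ref{l:allardreg} yields smoothness at $x_0$. Ranging $x_0$ over $B_{(1+\theta)R-1/3}\cap\Sigma$ gives smoothness throughout this ball, and the curvature bound on the slightly smaller ball $B_{(1+\theta)R-1/2}$ then follows from standard Ecker--Huisken interior derivative estimates for the smooth MCF.

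The main obstacle is the near-part estimate $F_{y_0,s_0}(\Sigma\cap B_R)\lesssim 1$. Although intuitively a smooth embedded surface with $|A|\le C_1$ looks flat on scales well below $1/C_1$, quantifying this for the $F$-functional is delicate because $y_0$ need not lie on $\Sigma$ and because distant, still-curved sheets of $\Sigma\cap B_R$ can contribute to the integral. The strict inequality $\lambda_0<2$ is essential here: it forbids any limiting multiplicity-two configurations that would otherwise push $F$ past the Allard threshold $1+\epsilon_A$.
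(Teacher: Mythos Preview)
Your approach is essentially the paper's: establish small-scale Gaussian density bounds at centers inside $B_R$ using smoothness, the $|A|$ bound, and $\lambda_0<2$ (to force a single sheet), then propagate these outward via self-shrinker monotonicity, and conclude regularity by Lemma~\ref{l:allardreg}. The paper orders the first two steps the other way around---it first proves the density bound $(\star)$ at \emph{all} small scales for centers in $B_{R-1/4}$, then invokes the shrinker inequality $F_{sy,\,1+as^2}(\Sigma)\le F_{y,\,1+a}(\Sigma)$ from \cite{CM1} to push the center outward---whereas you fix $x_0$ in the larger ball, run monotonicity of $s\mapsto F_{x_0\sqrt{1-s},\,s}(\Sigma)$ back to a single witness scale $s_0$ with center inside $B_R$, and estimate there. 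These are reparametrizations of the same monotonicity, and your final paragraph correctly identifies where $\lambda_0<2$ enters.

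One point to tighten: for the final curvature estimate you cite Ecker--Huisken interior estimates, but those propagate an \emph{existing} curvature or gradient bound forward in time rather than producing one from density control alone; since you have no a~priori $|A|$ bound on $B_{(1+\theta)R-1/3}\setminus B_R$, the citation does not close the loop by itself. The paper instead applies White's local regularity theorem \cite{W2} to the associated flow $\Sigma_t=\sqrt{-t}\,\Sigma$, which converts the Gaussian density bound directly into a pointwise $|A|$ estimate on the larger ball. An equivalent fix is to note that Allard already yields uniform $C^{1,\alpha}$ graphicality on a fixed scale, after which elliptic estimates for the shrinker equation give the $|A|$ bound.
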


\begin{proof}
We start with an almost-optimal bound on the $F$ functionals at small scales:
\begin{itemize}
\item[($\star$)]
Given any $\epsilon > 0$, there exists  $r_{\epsilon} \in (0,1)$ so that
\begin{align}
	F_{x_0 , t_0} (\Sigma_0)  \leq 1 + \epsilon
\end{align}
 for every $x_0 \in B_{R-1/4}$
and $t_0 \in (0, r_{\epsilon}^2)$.
\end{itemize}
\vskip1mm
Observe that the curvature bound implies that on any sufficiently small
scale $\Sigma$ decomposes into a collection of graphs with small gradient, but the entropy bound implies that
there is only one such graph.  Finally, (the scaled version of) Lemma \ref{l:elem} gives
($\star$).

The next step is to extend the entropy bound ($\star$) to a larger scale.  This follows from using Huisken's monotonicity for the
associated flow.  We will instead use the shrinker equation and, in particular, 
the following result from section $7$ in \cite{CM1} (see equation ($7.13$) there):

Given $y \in \RR^{n+1}$, $a \in \RR$, and $s > 1$ so that $1+a s^2 > 0$, then
\begin{align}	\label{e:yas}
	F_{sy, 1+as^2} (\Sigma) \leq F_{y,1+a} (\Sigma)  \, .
\end{align} 
We apply this to each $y \in B_{R-1}$ with $a=t_0 -1$ (for each $t_0$).  This can be done so long 
\begin{align}
	1 + s^2 (t_0 -1) = 1 + a s^2 > 0 \, .
\end{align}
It follows that we get some $\theta > 0$ and $\tau > 0$ so that every $x_0 \in B_{(1+\theta)(R-1/4)}$ has
\begin{align}
	F_{x_0 , t_0} (\Sigma) \leq 1 +\epsilon {\text{ for all }} t_0 \leq \tau \, .
\end{align}
Regularity now follows from Lemma \ref{l:allardreg}.

Since we have ($\star$), the curvature estimate now follows from
applying theorem $3.1$ in \cite{W2} to the associated MCF
 $\Sigma_t \equiv \sqrt{-t} \, \Sigma$.  We apply \cite{W2} on each ball $B_{r_{\epsilon}}(y)$ for 
$y \in B_{R-1/2}$ to get a curvature bound on this ball at time $r_0^2-1$.  This yields the claimed 
 $|A|$ bound on $\Sigma$ on the larger ball.
\end{proof}

\subsection{Extending curvature bounds outward}

 We can now   prove the first main ingredient for the rigidity.

\begin{proof}[Proof of Proposition \ref{p:step1}]
Proposition \ref{p:brakke}  immediately extends smoothness and the $|A|$ bound to a larger scale, but 
it remains to extend the positivity of $H$ to the larger scale.  This will follow from getting a uniform
bound on the time derivative of $H$ for the MCF
  $\Sigma_t \equiv \sqrt{-t} \, \Sigma$   corresponding to $\Sigma$.  This bound extends the positivity of
  $H$ forward in time for $\Sigma_t$ which corresponds to positivity of $H$ on the larger scale for $\Sigma$.

 We first use Proposition \ref{p:brakke}  to get   $\delta > 0$ and $C$ so that
 \begin{align}	\label{e:33}
    \sup_{B_{R} \cap \Sigma_t} |A| \leq C {\text{ for }} t \in (-1, \delta -1) \, .
 \end{align}
   Parabolic estimates of \cite{EH} (applied on balls of unit scale) 
   then give the uniform higher derivative bounds 
 \begin{align}
    \sup_{B_{R-1} \cap \Sigma_t} |\nabla A| + \left| \nabla^2 A \right|
    \leq C' {\text{ for }} t \in (-1, \delta -1) \, .
 \end{align}
 Using the equation $\partial_t H = \Delta H + |A|^2 \, H$, these bounds on $A$ and its derivatives give
 a uniform bound on 
  $\partial_t H$ on this set and, thus, strict positivity of $H$ propagates
 forward in time for some positive time interval,
completing the proof.
 
 \end{proof}

\section{Proof of Proposition \ref{p:step2}}  \label{s:step2}

In this section, we will prove the second key ingredient (Proposition
\ref{p:step2}) that gives the improvement in the cylindrical
estimates.  This proposition shows that if a shrinker has slightly
positive mean curvature and some large curvature bound on a large
ball, then we get much better bounds on $H$ and $A$ on a slightly smaller
ball.  Crucially, these uniform bounds do not depend on the initial
radius, so we get a fixed improvement that can be iterated.

\vskip2mm
The proof of Proposition \ref{p:step2} uses the positivity of $H$ to
prove that the tensor $\tau \equiv \frac{A}{H}$ is almost parallel.
This is proven over the next two subsections.  We then show that the
eigenvalues of $\tau$ fall into two clusters, one near zero
corresponding to the translation invariant directions of the
generalized cylinder and the other corresponding to the spherical part
of the cylinder.  This is done in the last two subsections.

\vskip2mm
Throughout this section, $\Sigma \subset \RR^{n+1}$ will be an
$n$-dimensional $F$-stationary varifold.

\subsection{Simons' equation}

This subsection contains some calculations that will be used to show
that $\tau = A/H$ is almost parallel.  The calculations in this
section are purely local and, thus, are valid at any smooth point of
$\Sigma$.

Given $f>0$, define a weighted divergence operator $\dv_f$ and drift
Laplacian $\cL_f$ by
\begin{align}
  \dv_f (V) &= \frac{1}{f} \, \e^{ |x|^2/4 } \, \dv_{\Sigma}
  \, \left( f \, \e^{ -|x|^2/4 } \, V \right) \, ,
  \\
  \cL_f \, u &\equiv \dv_f (\nabla u) =\cL \, u + \langle \nabla \log
  f , \nabla u \rangle \, .
\end{align}
Here $u$ may also be a tensor; in this case the divergence traces only
with $\nabla$. Note that $\cL=\cL_1$. We recall the quotient rule:
\begin{Lem} \label{l:quotr} Given a tensor $\tau$ and a function $g$
  with $g \ne 0$, then
  \begin{align}
    \cL_{g^2} \, \frac{\tau}{g} & = \frac{ g \, \cL \, \tau - \tau \,
      \cL \, g}{g^2} = \frac{ g \, L \, \tau - \tau \, L \, g}{g^2}
    \,.
  \end{align}
\end{Lem}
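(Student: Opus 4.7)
The plan is a direct computation expanding both sides and showing the cross terms cancel, then observing that the zeroth-order parts of $L$ drop out in the antisymmetric combination $gL\tau - \tau Lg$.

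First I would handle the first equality. By definition $\cL_{g^2} u = \cL u + \langle \nabla \log g^2, \nabla u\rangle = \cL u + \tfrac{2}{g}\langle \nabla g, \nabla u\rangle$, applied to $u = \tau/g$. I would split $\cL = \Delta - \tfrac{1}{2}\nabla_{x^T}$ and use the standard pointwise quotient identities
\[
\Delta(\tau/g) = \frac{\Delta \tau}{g} - \frac{2\langle \nabla g, \nabla \tau\rangle}{g^2} - \frac{\tau\,\Delta g}{g^2} + \frac{2\tau|\nabla g|^2}{g^3},
\qquad
\nabla_{x^T}(\tau/g) = \frac{\nabla_{x^T}\tau}{g} - \frac{\tau\,\nabla_{x^T}g}{g^2},
\]
which are valid for tensors $\tau$ since $\nabla$ and $\Delta$ are taken component-wise (the trace in $\Delta$ is only in the $\nabla$ slots, so the Leibniz rule goes through unchanged). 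Combining these gives
\[
\cL(\tau/g) = \frac{\cL \tau}{g} - \frac{\tau\,\cL g}{g^2} - \frac{2\langle \nabla g, \nabla \tau\rangle}{g^2} + \frac{2\tau|\nabla g|^2}{g^3}.
\]
Next I would add in the drift correction $\tfrac{2}{g}\langle \nabla g, \nabla(\tau/g)\rangle$; expanding $\nabla(\tau/g)$ by the quotient rule produces exactly $+\tfrac{2\langle \nabla g,\nabla \tau\rangle}{g^2} - \tfrac{2\tau|\nabla g|^2}{g^3}$, which cancels the last two terms above. This yields the first identity $\cL_{g^2}(\tau/g) = (g\cL\tau - \tau\cL g)/g^2$.

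For the second equality, I would simply note that $L = \cL + Q$ where $Q \equiv |A|^2 + \tfrac{1}{2}$ is a scalar zeroth-order multiplier, so
\[
gL\tau - \tau Lg = g(\cL\tau + Q\tau) - \tau(\cL g + Qg) = g\cL\tau - \tau\cL g,
\]
the $Q$-terms cancelling because scalar multiplication commutes past the tensor $\tau$. Dividing by $g^2$ gives the stated equality.

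There is no real obstacle; the only subtlety is keeping straight that $\tau$ is a tensor while $g$ is a scalar, so that the Leibniz rule for $\Delta$ and $\nabla_{x^T}$ applied to the product $g^{-1}\tau$ behaves exactly as in the scalar case, and the inner product $\langle \nabla g, \nabla \tau\rangle$ is the tensor obtained by contracting $\nabla g$ into the differentiation slot of $\nabla \tau$. Once that is understood, the computation is purely algebraic bookkeeping.
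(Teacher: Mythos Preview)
Your proposal is correct and takes essentially the same approach as the paper: compute $\cL(\tau/g)$ by the quotient rule, identify the two cross terms as $-\langle \nabla \log g^2, \nabla(\tau/g)\rangle$, and absorb them into the definition of $\cL_{g^2}$; the paper does this in one line via the weighted-divergence form of $\cL$ rather than by splitting $\cL=\Delta-\tfrac12\nabla_{x^T}$, but the computation is identical. The paper leaves the second equality (with $L$) implicit, and your observation that the zeroth-order term $|A|^2+\tfrac12$ cancels in $gL\tau-\tau Lg$ is exactly the point.
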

 
\begin{proof}
  We compute
  \begin{align}
    \cL \, \frac{\tau}{g} &= \e^{ |x|^2/4 } \, \dv_{\Sigma} \,
    \left( \e^{ -|x|^2/4 } \, \nabla \frac{\tau}{g} \right) =
    \e^{ |x|^2/4 } \, \dv_{\Sigma} \, \left( \e^{
        -|x|^2/4 } \, \left[ \frac{\nabla \tau}{g} -
        \frac{\tau \nabla g}{g^2} \right]
    \right) \notag \\
    & = \frac{ \cL \, \tau}{g} - \frac{\tau \, \cL \, g}{g^2} - 2\,
    \frac{ \langle \nabla \tau , \nabla g \rangle }{g^2} + 2 \,
    \frac{\tau \, |\nabla g|^2}{g^3} = \frac{ \cL \, \tau}{g} - \frac{
      \tau \, \cL \, g}{g^2} - \langle \nabla \, \log g^2 , \nabla \,
    \frac{\tau}{g} \, \rangle \notag \, .
  \end{align}
\end{proof}
 
\begin{Pro} \label{p:eqnsg} On the set where $H > 0$, we have
  \begin{align}
    \cL_{ H^2} \, \frac{ A}{H} &=   0 \, , \\
    \cL_{ H^2} \, \frac{ |A|^2}{H^2} &= 2\, \left|\nabla \frac{A}{H}
    \right|^2 \, .
  \end{align}
\end{Pro}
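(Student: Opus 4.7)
The plan is to reduce both identities to the two Simons-type eigenvalue equations from \cite{CM1}, namely $LH = H$ and $LA = A$, via the quotient rule of Lemma \ref{l:quotr} and the standard Bochner identity for the drift Laplacian. Since the operator $L = \cL + |A|^2 + \tfrac{1}{2}$ acts on the scalar $H$ and the tensor $A$ in exactly the same way (with the same eigenvalue $1$), the quotient $A/H$ sits in the kernel of the weighted operator $\cL_{H^2}$.

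For the first identity, I would apply Lemma \ref{l:quotr} directly with $\tau = A$ and $g = H$, which is valid pointwise on the open set where $H > 0$. The right hand side is
\begin{equation*}
\frac{H\,LA - A\,LH}{H^2} = \frac{H \cdot A - A \cdot H}{H^2} = 0,
\end{equation*}
using the two eigenvalue equations from \cite{CM1}. This is the heart of the proposition: the choice of weight $H^2$ is precisely what forces this cancellation.

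For the second identity, I would invoke the Bochner-type identity, valid for any positive weight $f$ and any (possibly tensor-valued) $T$:
\begin{equation*}
\cL_f |T|^2 = 2 \langle T , \cL_f T \rangle + 2 |\nabla T|^2 .
\end{equation*}
This follows by expanding $\Delta |T|^2 = 2\langle T , \Delta T\rangle + 2|\nabla T|^2$ and using that the drift terms $-\tfrac{1}{2}\nabla_{x^T}$ and $\langle \nabla \log f , \nabla \cdot\rangle$ are first-order derivations, hence satisfy the Leibniz rule when applied to $|T|^2$. Specializing to $f = H^2$ and $T = A/H$, the first identity already established makes $\langle T, \cL_{H^2} T\rangle$ vanish, and since $|A/H|^2 = |A|^2/H^2$, the claimed formula follows.

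There is no real obstacle; the only minor point to be careful with is verifying the Bochner identity in the tensor-valued, drift-Laplacian setting, but this is a routine product-rule computation. The conceptual content is entirely contained in the fact that $H$ and the components of $A$ are eigentensors of $L$ with the same eigenvalue, which is exactly why the weighted operator $\cL_{H^2}$ (rather than an unweighted one) is the right object to consider for the ratio $A/H$.
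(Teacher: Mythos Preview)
Your proposal is correct and follows essentially the same approach as the paper: the first identity is obtained by applying the quotient rule (Lemma \ref{l:quotr}) together with the eigenvalue equations $LA = A$ and $LH = H$ from \cite{CM1}, and the second identity follows from the first via the Bochner-type product rule for $\cL_{H^2}$ applied to $|A/H|^2 = \langle A/H, A/H\rangle$. The paper's proof is just a terser version of exactly what you wrote.
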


\begin{proof}
  Since $L H = H$ and $L A = A$ by \cite{CM1}, the first claim follows
  from the quotient rule (Lemma \ref{l:quotr}).  The second claim
  follows from the first since $\frac{ |A|^2}{H^2} = \langle \frac{
    A}{H} , \frac{ A}{H} \rangle$.
\end{proof}

\subsection{An effective bound for shrinkers with positive mean curvature}

The next proposition gives exponentially decaying integral bounds for
$\nabla (A/H)$ when $H$ is positive on a large ball.  It will be
important that these bounds decay rapidly.

\begin{Pro} \label{p:effective} If $B_R \cap \Sigma$ is smooth with $H
  > 0$, then for $s \in (0 , R)$ we have
  \begin{align}
    \int_{B_{R-s} \cap \Sigma} \left| \nabla \frac{A}{H} \right|^2 \,
    |H|^2 \, \e^{- |x|^2/4 } &\leq \frac{4}{s^2} \, \sup_{B_R
      \cap \Sigma} |A|^2 \, \Vol (B_R \cap \Sigma) \, \e^{ -
      \frac{(R-s)^2}{4} } \, .
  \end{align}
\end{Pro}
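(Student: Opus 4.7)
The plan is to run a weighted Bochner-type integral argument based on Proposition \ref{p:eqnsg}. That proposition already identifies $|\nabla (A/H)|^2$ as (half of) $\mathcal{L}_{H^2}$ applied to the scalar quantity $|A|^2/H^2$, so the natural move is to integrate this identity against a cutoff squared with respect to the measure $H^2 e^{-|x|^2/4}\,d\mu$, which is the measure built into the weighted drift operator $\mathcal{L}_{H^2}$.

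Concretely, first I would fix a radial Lipschitz cutoff $\phi$ on $\RR^{n+1}$ (restricted to $\Sigma$) with $\phi \equiv 1$ on $B_{R-s}$, $\phi \equiv 0$ outside $B_R$, and $|\nabla \phi| \leq 1/s$. Since $B_R \cap \Sigma$ is smooth and $H>0$ there, $\phi$ is compactly supported in the smooth region and $A/H$ is defined. The divergence form of $\mathcal{L}_{H^2}$ gives the integration by parts
\begin{equation*}
\int_{\Sigma} \phi^{2} \, \mathcal{L}_{H^{2}}\!\left(\tfrac{|A|^{2}}{H^{2}}\right) H^{2} \, e^{-|x|^2/4}
= -\int_{\Sigma} \langle \nabla(\phi^{2}), \nabla \tfrac{|A|^{2}}{H^{2}} \rangle \, H^{2} \, e^{-|x|^2/4}.
\end{equation*}
Combining with Proposition \ref{p:eqnsg} and writing $\nabla \tfrac{|A|^{2}}{H^{2}} = 2\langle \tfrac{A}{H}, \nabla \tfrac{A}{H} \rangle$, the right-hand side is bounded pointwise by $8\phi|\nabla\phi|\cdot |A|\cdot H \cdot |\nabla(A/H)|\,e^{-|x|^2/4}$.

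Next, I would absorb using the elementary inequality $2ab \le a^2 + b^2$ with $a = \phi H |\nabla(A/H)|$ and $b = 2|\nabla \phi|\,|A|$, so that
\begin{equation*}
8\phi|\nabla\phi|\,|A|\,H\,\bigl|\nabla\tfrac{A}{H}\bigr|
\leq 2\phi^{2} H^{2}\,\bigl|\nabla\tfrac{A}{H}\bigr|^{2} + 8|\nabla \phi|^{2}|A|^{2}.
\end{equation*}
Plugging back and absorbing the $\phi^2 H^2|\nabla(A/H)|^2$ term from the right-hand side into the $2\phi^2 H^2|\nabla(A/H)|^2$ coming from Proposition \ref{p:eqnsg} on the left yields
\begin{equation*}
\int_{\Sigma} \phi^{2}\,\bigl|\nabla\tfrac{A}{H}\bigr|^{2} H^{2}\,e^{-|x|^2/4}
\leq 4\int_{\Sigma} |\nabla \phi|^{2}\,|A|^{2}\,e^{-|x|^2/4}.
\end{equation*}

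Finally, the support of $\nabla\phi$ lies in $B_R \setminus B_{R-s}$, where $|x|\geq R-s$ and hence $e^{-|x|^{2}/4}\leq e^{-(R-s)^{2}/4}$. Using $|\nabla\phi|\leq 1/s$ and bounding $|A|^2$ by its supremum on $B_R \cap \Sigma$, the right-hand side is at most $\tfrac{4}{s^{2}}\sup_{B_R\cap\Sigma}|A|^{2}\cdot \Vol(B_R\cap\Sigma)\cdot e^{-(R-s)^{2}/4}$, giving the claim. The only real subtlety is that one must verify that $\phi$ lies in the smooth region where the integration by parts and the pointwise identity in Proposition \ref{p:eqnsg} are both valid; this is immediate from the hypothesis that $B_R \cap \Sigma$ is smooth with $H>0$, so I do not expect a substantive obstacle beyond the absorption bookkeeping above.
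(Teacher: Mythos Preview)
Your approach is exactly the paper's: integrate the identity $\cL_{H^2}(|A|^2/H^2)=2|\nabla(A/H)|^2$ against $\phi^2 H^2 e^{-|x|^2/4}$, integrate by parts, absorb via Cauchy--Schwarz, and finish with the linear cutoff. One bookkeeping slip: the pointwise bound on the right-hand side is $4\phi|\nabla\phi|\,|A|\,H\,|\nabla(A/H)|\,e^{-|x|^2/4}$, not $8$ (you have $\nabla\phi^2=2\phi\nabla\phi$ and $\nabla(|A|^2/H^2)=2\langle A/H,\nabla(A/H)\rangle$, giving $2\cdot 2=4$); with your $8$ the absorption step eats the entire left-hand side and yields nothing, whereas with the correct $4$ the absorption leaves exactly the inequality you state.
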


\begin{proof}
  Set $\tau= A/H$ and $u = | \tau |^2 = |A|^2/H^2$, so that $\cL_{H^2}
  \, u = 2\, \left| \nabla \tau \right|^2$ by Proposition
  \ref{p:eqnsg}.  Fix a smooth cutoff function $\phi$ with support in
  $B_R$. Using the divergence theorem, the formulas for
  $\cL_{H^2}$, and the absorbing inequality $4ab \leq a^2 + 4b^2$, we get
  \begin{align}
    0 &= \int_{B_R \cap \Sigma} \dv_{H^2} \, \left( \phi^2 \, \nabla u \right) \, H^2 \, \e^{- |x|^2/4 } \notag \\
    &= \int_{B_R \cap \Sigma} \left(\phi^2 \, \cL_{H^2}u + 4 \phi
      \langle \nabla \phi , \nabla u \rangle \right)
    \, H^2 \, \e^{- |x|^2/4 }   \\
    &= \int_{B_R \cap \Sigma} \left(2\, \phi^2 \, \left| \nabla \tau
      \right|^2 + 4 \phi \langle \nabla \phi , \tau\cdot\nabla \tau
      \rangle \right)
    \, H^2 \, \e^{- |x|^2/4 } \notag  \\
    &\geq \int_{B_R \cap \Sigma} \left(\phi^2 \, \left| \nabla \tau
      \right|^2- 4 \, |\tau|^2 \, \left| \nabla \phi \right|^2 \right) \,
    H^2 \, \e^{- |x|^2/4 }, \notag
  \end{align}
  from which we obtain
  \begin{equation}
    \int_{B_R \cap \Sigma} \left(\phi^2 \, \left| \nabla \frac{A}{H}
      \right|^2    \, |H|^2 \right) \,   \e^{- |x|^2/4 } 
    \leq 4\, \int_{B_R \cap \Sigma} \left| \nabla \phi \right|^2 \, |A|^2 \, 
    \e^{- |x|^2/4}.
  \end{equation}
  The proposition follows by choosing $\phi \equiv 1$ on $B_{R-s}$ and
  going to zero linearly on $\partial B_{R}$.
\end{proof}

The next corollary establishes bounds on two derivatives of the tensor
$\tau = A/H$ by combining the integral estimates of the
previous proposition with elliptic estimates.

\begin{Cor} 
  \label{c:taubound} 
  Given $n>0$, $\delta>0$, $\lambda_0>0$ and $C_1>0$ there exists a
  constant $C_{\tau}>0$ such that if $\lambda (\Sigma) \leq
  \lambda_0$, $R\geq 2$, and
  \begin{itemize}
  \item $B_R \cap \Sigma$ is smooth with $|A| \leq C_1$ and $H\geq
    \delta>0$,
  \end{itemize}
  then 
  \begin{align}
    \sup_{ B_{R-2} \cap \Sigma } \, \, \left| \nabla \tau \right|^2 +
    R^{-2} \, \left| \nabla^2 \tau \right|^2 \leq C_{\tau} \, R^{2n}
    \, \e^{-R/4} \, .
  \end{align}
\end{Cor}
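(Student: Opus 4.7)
The plan is to bootstrap the weighted $L^2$ bound from Proposition \ref{p:effective} into pointwise bounds via interior elliptic regularity for the drift-harmonic equation $\cL_{H^2}\tau=0$ (Proposition \ref{p:eqnsg}). First, applying Proposition \ref{p:effective} with $s=1$ gives
\[
\int_{B_{R-1}\cap\Sigma}\left|\nabla\tau\right|^2 H^2\,e^{-|x|^2/4} \leq 4 C_1^2\, \Vol(B_R\cap\Sigma)\, e^{-(R-1)^2/4}.
\]
The entropy bound $\lambda(\Sigma)\leq\lambda_0$ gives $\Vol(B_R\cap\Sigma)\leq C(n,\lambda_0)R^n$ (test $F_{0,R^2}$ against the characteristic function of $B_R$). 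For any $x_0\in B_{R-2}$ one has $B_{1/2}(x_0)\subset B_{R-3/2}$, on which $H\geq\delta$ and $e^{-|x|^2/4}\geq e^{-(R-3/2)^2/4}$; dividing the weighted bound by these pointwise lower bounds produces
\[
\int_{B_{1/2}(x_0)\cap\Sigma}\left|\nabla\tau\right|^2 \leq \frac{4C_1^2}{\delta^2}\,\Vol(B_R\cap\Sigma)\,e^{\left[(R-3/2)^2-(R-1)^2\right]/4} \leq C(n,\lambda_0,\delta,C_1)\,R^n\,e^{-R/4},
\]
using $(R-3/2)^2-(R-1)^2 = -(R-5/4)$.

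Next, to convert the $L^2$ decay into a pointwise bound, I would use that by Proposition \ref{p:eqnsg}, $\tau$ satisfies $\cL_{H^2}\tau=0$, which rewrites as $\Delta_\Sigma\tau = \bigl(\tfrac{1}{2}x^T - 2\nabla\log H\bigr)\cdot\nabla\tau$. The coefficients of this linear elliptic equation are smooth and bounded by $CR$ on $B_R\cap\Sigma$ (using $|A|\leq C_1$, $H\geq\delta$, and $|\nabla H|\leq\tfrac{1}{2}|A|\,|x^T|\leq C_1R/2$), with Lipschitz bounds on the coefficients coming from the parabolic derivative estimates of \cite{EH} applied to the associated MCF $\Sigma_t=\sqrt{-t}\,\Sigma$. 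A naive Moser iteration on unit-scale balls would cost a Harnack constant $e^{CR}$ that would swamp the exponential decay; instead I would apply Moser (or $W^{2,p}$ followed by Sobolev embedding) on balls of radius $\sim 1/R$ around $x_0$, where after rescaling the effective drift is $O(1)$ and the regularity constants are uniform. The volume mismatch between $1/R$-balls and unit balls then contributes a polynomial factor $R^n$, yielding
\[
|\nabla\tau|^2(x_0) + R^{-2}|\nabla^2\tau|^2(x_0) \leq C(n,\lambda_0,\delta,C_1)\, R^n\int_{B_{1/2}(x_0)\cap\Sigma}\left|\nabla\tau\right|^2.
\]
The $R^{-2}$ weight on $|\nabla^2\tau|^2$ absorbs one extra power of $R$ incurred when estimating second derivatives by differentiating the equation. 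Combining with the $L^2$ decay above delivers the claimed bound $C_\tau\,R^{2n}\,e^{-R/4}$.

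The main obstacle is keeping the elliptic regularity constants polynomial in $R$: the drift $x^T/2$ grows linearly, and a direct Moser or Schauder estimate on unit-scale balls would produce an exponential-in-$R$ constant that overwhelms the $e^{-R/4}$ gain from Proposition \ref{p:effective}. The rescaling to $1/R$-scale balls is what resolves this, at the modest cost of a polynomial $R^{2n}$ prefactor; since the exponential $e^{-R/4}$ dominates any polynomial in $R$ for $R$ large, this suboptimal constant is harmless in the subsequent iteration used to prove Proposition \ref{p:step2}.
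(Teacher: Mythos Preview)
Your proposal is correct and follows essentially the same route as the paper's proof: apply Proposition~\ref{p:effective} to get an $L^2$ bound on $\nabla\tau$ with an $\e^{-R/4}$ gain after stripping off the Gaussian weight on a slightly smaller ball, then upgrade to pointwise bounds using elliptic estimates for $\cL_{H^2}\tau=0$ on balls of radius $\sim 1/R$ so that the linearly growing drift becomes $O(1)$ after rescaling. The only cosmetic differences are your choice of $s=1$ rather than $s=1/2$ and your localization to $B_{1/2}(x_0)$ before applying the $1/R$-scale estimate; your added discussion of why unit-scale Moser would fail is a helpful gloss on a point the paper leaves implicit.
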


\begin{proof}
  Throughout this proof $C$ will be a constant that depends only on
  $n$, $\delta$, $\lambda_0$ and $C_1$; $C$ will be allowed to change
  from line to line.

  Proposition \ref{p:effective} with $s=1/2$ gives
  \begin{align}
    \int_{B_{R-1/2} \cap \Sigma} \left| \nabla \tau \right|^2 \, \e^{-
      |x|^2/4 } &\leq C \, R^n \, \e^{ - (R-1/2)^2/4} \, .
  \end{align}
  Since $\e^{- |x|^2/4 } \geq \e^{- \frac{R^2 -2R + 1}{4} } $ on
  $B_{R-1}$, it follows that
  \begin{align}
    \int_{B_{R-1} \cap \Sigma} \left|\nabla \tau \right|^2 &\leq C \,
    R^n \, \e^{ - \frac{R}{4} } \, .
  \end{align}
  This gives the desired integral decay on $\nabla \tau$.  We will
  combine this with elliptic theory to get the pointwise bounds. The
  key is that $\tau$ satisfies the elliptic equation $\cL_{H^2} \tau =
  0$.  The two first order terms in the equation come from $x^T$ in
  $\cL$ and $\nabla H$; both grow at most linearly (in the second
  case, we differentiate the shrinker equation and use the bound on $|A|$).
  Therefore, we can apply elliptic theory on balls of radius $1/R$ to
  get for any $p \in B_{R-2} \cap \Sigma$ that
  \begin{align}
    \left( |\nabla \tau|^2 + R^{-2} \, | \nabla^2 \tau |^2 \right)(p)
    \leq C \, R^n \, \int_{B_{\frac{1}{R} } (p) \cap \Sigma} | \nabla
    \tau |^2 \, .
  \end{align}
  Combining this with the integral bound gives the corollary.
\end{proof}

\subsection{Finding small eigenvalues of $A$}
  
The next lemma shows that if we have an almost parallel symmetric
$2$-tensor with two distinct eigenvalues, then the plane spanned by
the corresponding eigenvectors is almost flat.
  
The results in this subsection are valid for a general smooth
hypersurface, possibly with boundary, and do not use an equation for
the hypersurface.

\begin{Lem} \label{l:curvature} Suppose that $B$ is a symmetric
two-tensor on a Riemannian manifold with $|\nabla B| , |\nabla^2 B|
\leq \eps\leq 1$. If $e_1,e_2$ are unit vectors at $p$
with
\begin{align*}
  B_p (e_1) = 0 {\text{ and }}  B_p (e_2) = \kappa \, e_2,\qquad \kappa\ne0,
\end{align*}
then the sectional curvature $K$ of the 2-plane spanned by $e_1$ and
$e_2$ satisfies
\begin{align*}
  \left| K \right| \leq   2\, \eps \, \left( \frac{1}{|\kappa|} + \frac{1}{\kappa^2} 
    \right) \, .
\end{align*}
\end{Lem}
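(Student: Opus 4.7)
The plan is to extract the sectional curvature from the Ricci identity applied to the tensor $B$, exploiting that $B_p$ annihilates $e_1$ and stretches $e_2$ by $\kappa$. Viewing $B$ as a symmetric $(1,1)$-tensor via the metric, the commutator of second covariant derivatives obeys the pointwise tensorial identity
\begin{equation*}
  (\nabla^2_{X,Y} B - \nabla^2_{Y,X} B)(Z) = R(X,Y)\bigl(B(Z)\bigr) - B\bigl(R(X,Y) Z\bigr),
\end{equation*}
whose left-hand side has norm at most $2\,|\nabla^2 B| \leq 2\eps$ by hypothesis. Since this is a tensorial equality at the single point $p$, I do not need to extend $e_1, e_2$ to vector fields.

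Next I would apply this identity at $p$ with $X = e_1$, $Y = e_2$, $Z = e_2$ and then take the inner product with $e_1$. Using $B(e_2) = \kappa e_2$, the symmetry of $B$, and the crucial cancellation $B(e_1) = 0$, the right-hand side collapses to
\begin{equation*}
  \kappa\,\langle R(e_1,e_2) e_2, e_1\rangle - \langle R(e_1,e_2) e_2, B(e_1)\rangle = \kappa\, K,
\end{equation*}
where $K = \langle R(e_1,e_2) e_2, e_1\rangle$ is the sectional curvature of the plane spanned by $e_1$ and $e_2$. Combining the two estimates yields $|\kappa K| \leq 2\eps$, hence $|K| \leq 2\eps / |\kappa|$, which is in particular bounded by $2\eps\bigl(1/|\kappa| + 1/\kappa^2\bigr)$.

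There is no serious obstacle here: the argument is a one-line consequence of the Ricci identity once one observes that the second term on the right vanishes because $B(e_1) = 0$. The hypothesis $|\nabla B| \leq \eps$ is not needed for this clean route to the estimate; I expect it appears in the statement because an alternative proof would extend $e_1, e_2$ to smooth eigenvector fields of $B$ in a neighborhood of $p$ and track the connection coefficients $\langle \nabla_{e_i} e_j, e_k\rangle$, whose defining equations involve $\nabla B$ together with denominators $\kappa$ and $\kappa^2$ produced by inverting the eigenvalue gap. That alternative would naturally produce the form $1/|\kappa| + 1/\kappa^2$ appearing in the bound.
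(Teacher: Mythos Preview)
Your argument is correct and is genuinely different from the paper's proof. You apply the Ricci identity for the $(1,1)$-tensor $B$ directly at the point $p$, so the curvature appears in one stroke from the commutator $\nabla^2_{e_1,e_2}B-\nabla^2_{e_2,e_1}B$, and the eigenvector hypotheses kill the unwanted term via $\langle B(R(e_1,e_2)e_2),e_1\rangle=\langle R(e_1,e_2)e_2,B(e_1)\rangle=0$. This yields the sharper bound $|K|\le 2\eps/|\kappa|$ and uses only the hypothesis on $|\nabla^2 B|$. The paper instead extends $e_1,e_2$ to local fields $v_1,v_2$ with $\nabla v_j(p)=0$, replaces $v_2$ by $w_2=B(v_2)/\kappa$, and computes $K$ from the standard three-term connection formula; bounding those terms brings in $|\nabla B|$ through $|\nabla w_2(p)|\le\eps/|\kappa|$ and produces the extra $\eps^2/\kappa^2$ contribution you anticipated. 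Your route is shorter and gives a stronger conclusion; the paper's route is more explicit about how the geometry of the approximate eigenvector field $w_2$ degenerates as $\kappa\to 0$, which is closer in spirit to how $\tau=A/H$ is used elsewhere in the section, but for the lemma itself your proof is preferable.
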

 
\begin{proof}
Let $v_1$ and $v_2$ be smooth vector fields in a neighborhood of $p$
with 
$$
v_j (p) = e_j,\qquad \nabla v_j(p)=0,\qquad |v_j| \leq 1,
$$ 
and set
\begin{align*}
  w_2 \equiv \frac{B(v_2)}{\kappa}
\end{align*}
so $w_2 (p) = e_2 (p)$.  Then the sectional curvature $K$ at $p$ is
\begin{align}	\label{curv}	
   K = \langle R(e_2 , e_1) e_2 , e_1 \rangle = 
    \langle \nabla_{v_1} \nabla_{w_2} w_2 , v_1 \rangle -
   \langle \nabla_{w_2} \nabla_{v_1} w_2 , v_1 \rangle - 
  \langle \nabla_{[v_1 , w_2]} w_2 , v_1 \rangle \, .
\end{align}
We will bound each of these terms by showing that $\nabla w_2$ and
$\nabla^2 w_2$ are orthogonal to $v_1$ up to small error terms.
    
Let $x$ and $y$ be vector fields with $|x(p)| = |y(p)| = 1$.  We have
\begin{align*}
  \kappa  \, \nabla_x w_2 =  (\nabla_x B) (v_2)  +  B (\nabla_x v_2)   \, .
\end{align*} 
so at $p$ one has
\begin{align}
  |\nabla_x w_2(p)|\le\frac{\eps}{|\kappa|},\quad
  |[v_1,w_2](p)|\le\frac{\eps}{|\kappa|},\quad
  |\langle \nabla_{[v_1 , w_2]} w_2 , v_1 \rangle(p)|
  \leq\frac{\eps^2}{|\kappa|^2},
\label{a1}
\end{align}
thus estimating the third term of \eqr{curv}.

Differentiating again, we have
\begin{align*}
  \kappa\,\nabla_y\nabla_x w_2 &= \nabla_y \left\{(\nabla_x B) (v_2)  +
    B (\nabla_x v_2)  \right\} \\
   &= (\nabla^2_{y,x} B) (v_2)  +(\nabla_{ \nabla_y x}B) (v_2)+(\nabla_x B) (\nabla_y v_2) \\
  &\qquad + (\nabla_y B) (\nabla_x v_2) + B (\nabla_y \nabla_x v_2)\,. 
\end{align*}
The last term on the right is in the range of $B$ and, thus, is
orthogonal to $v_1(p) = e_1$ which is in the kernel of $B$.  
We obtain at $p$
%The labelling is not implemented correctly. Can look it up later
\begin{align}
\begin{aligned}
|\langle\nabla_{v_1} \nabla_{w_2} w_2,v_1\rangle(p)|
&\leq  \frac{1}{|\kappa|}\bigg(|\langle \nabla^2_{v_1,w_2} B)
(v_2),v_1\rangle(p)| \\
&\qquad\qquad\qquad\qquad+|\langle\nabla_{\nabla_{v_1}w_2} B)
(v_2),v_1\rangle(p)|+0+0\bigg)\\
&\leq\frac{1}{|\kappa|}\left(\eps+\frac{\eps^2}{|\kappa|}\right)
=\frac{\eps}{|\kappa|}+\frac{\eps^2}{|\kappa|^2}.
\end{aligned}
\label{a2}
\end{align}
Similarly we obtain
\begin{align}
|\langle\nabla_{w_2} \nabla_{v_1} w_2,v_1\rangle(p)|
\leq\frac{\eps}{|\kappa|}.
\label{a3}
\end{align}
Combining \eqr{curv}, \eqr{a1}, \eqr{a2} and \eqr{a3} gives the desired bound.
\end{proof}

We will apply the above lemma when $B$ is given by $\tau - \kappa_1 \,
g$ where $\kappa_1$ is one of the small eigenvalues of $\tau$ at the
point $p$.

\begin{Cor} \label{c:lowev} Suppose that $\Sigma \subset \RR^{n+1}$ is
  a hypersurface (possibly with boundary) with the following
  properties
 \begin{itemize}
  \item $0 <  \delta \leq H$ on $\Sigma$.
  \item The tensor $\tau \equiv A/H$ satisfies $\left| \nabla \tau
    \right| + \left| \nabla^2 \tau \right| \leq \eps\leq 1$.
  \item At the point $p \in \Sigma$, $\tau_p$ has at least two distinct eigenvalues $\kappa_1 \ne \kappa_2$.
 \end{itemize}
Then  
\begin{align*}
  \left| \kappa_1 \kappa_2 \right| \leq \frac{2 \, \eps}{\delta^2}
  \, \left(  \frac{1}{\left| \kappa_1 - \kappa_2 \right|}  +
  \frac{1}{\left| \kappa_1 - \kappa_2 \right|^2} \right) \, .
\end{align*}

\end{Cor}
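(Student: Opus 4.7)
The plan is to apply Lemma \ref{l:curvature} to the shifted tensor $B \equiv \tau - \kappa_1 \, g$, where $g$ is the induced metric on $\Sigma$, and then convert the resulting sectional curvature bound into a bound on $\kappa_1 \kappa_2$ using the Gauss equation.

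First I would choose unit eigenvectors $e_1, e_2$ at $p$ for $\tau_p$ with eigenvalues $\kappa_1, \kappa_2$. Since $\tau$ is symmetric (as $A$ is and $H$ is a positive scalar) and the eigenvalues are distinct, $e_1 \perp e_2$. With $B = \tau - \kappa_1 g$ we have at $p$
\begin{equation*}
  B_p(e_1) = 0, \qquad B_p(e_2) = (\kappa_2 - \kappa_1) \, e_2,
\end{equation*}
so the hypothesis of Lemma \ref{l:curvature} holds with $\kappa = \kappa_2 - \kappa_1 \ne 0$. Moreover, $\nabla B = \nabla \tau$ and $\nabla^2 B = \nabla^2 \tau$ (since $g$ is parallel and $\kappa_1$ is a constant), so the derivative bounds $|\nabla B|, |\nabla^2 B| \leq \eps$ are inherited. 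Lemma \ref{l:curvature} then yields
\begin{equation*}
  |K(e_1, e_2)| \leq 2\eps \left( \frac{1}{|\kappa_1 - \kappa_2|} + \frac{1}{(\kappa_1 - \kappa_2)^2} \right).
\end{equation*}

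Next I would bring in the Gauss equation to express $K(e_1, e_2)$ in terms of $A$. Since $\tau = A/H$ and $e_1, e_2$ diagonalize $\tau$ with distinct eigenvalues, they also diagonalize $A$, giving $A(e_i, e_i) = H \kappa_i$ and $A(e_1, e_2) = 0$. The Gauss equation for a hypersurface in $\RR^{n+1}$ therefore gives
\begin{equation*}
  K(e_1, e_2) = A(e_1, e_1) \, A(e_2, e_2) - A(e_1, e_2)^2 = H^2 \, \kappa_1 \kappa_2.
\end{equation*}

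Finally, using $H \geq \delta > 0$ to divide by $H^2$, I obtain
\begin{equation*}
  |\kappa_1 \kappa_2| = \frac{|K(e_1, e_2)|}{H^2} \leq \frac{2\eps}{\delta^2} \left( \frac{1}{|\kappa_1 - \kappa_2|} + \frac{1}{|\kappa_1 - \kappa_2|^2} \right),
\end{equation*}
as required. There is no real obstacle; the only thing to be careful about is that the shift by $\kappa_1 g$ preserves the derivative bounds and that the eigenvectors of $\tau$ are automatically $g$-orthogonal and $A$-diagonalizing, so the Gauss equation produces exactly $H^2 \kappa_1 \kappa_2$ with no cross term.
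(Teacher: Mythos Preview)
Your proof is correct and follows essentially the same approach as the paper: apply Lemma~\ref{l:curvature} to the shifted tensor $B=\tau-\kappa_1 g$ with $\kappa=\kappa_2-\kappa_1$, then use the Gauss equation $K=H^2\kappa_1\kappa_2$ together with $H\ge\delta$ to convert the sectional curvature bound into the claimed estimate. You have in fact spelled out a couple of details (orthogonality of the eigenvectors, $\nabla B=\nabla\tau$) that the paper leaves implicit.
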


\begin{proof}
Let $K_p$ be the sectional curvature of the plane at $p$ spanned by an
eigenvector for $\tau_p$ with eigenvalue $\kappa_1$ and one with eigenvalue $\kappa_2$.  Since 
$A= H \, \tau$, the Gauss 
equation gives  
\begin{align*}
  |K_p| = H^2 \, \left| \kappa_1 \kappa_2 \right| \geq \delta^2 \, \left| \kappa_1 \kappa_2 \right|  \, .
\end{align*}
On the other hand, Lemma \ref{l:curvature} with $B = \tau - \kappa_1
\, g$ and $\kappa = \kappa_2 - \kappa_1$ gives  
\begin{align*}
   |K_p| \leq  2\, \eps \, \left( \frac{1}{ |\kappa_2
     - \kappa_1|} + \frac{1}{ |\kappa_2
     - \kappa_1|^2} 
    \right) \, .
\end{align*}
The corollary follows by combining these.
\end{proof}

\subsection{The proof of Proposition \ref{p:step2}}

We are now ready to prove the second key proposition.

\begin{proof}[Proof of Proposition \ref{p:step2}]
Fix $n$, $\lambda_0>0$, $\delta_0>0$, and $C_0>0$. Let $R>0$ and
assume that $\Sigma$ is a shrinker in $\R^{n+1}$,
$\lambda(\Sigma)\le\lambda_0$, $\Sigma\cap B_R$ is smooth and
$$
|A| \leq C_0\text{ and }0 < \delta_0 \leq H\qquad\text{ on }\Sigma\cap
B_R.
$$
It follows by Corollary \ref{c:taubound} that the tensor $\tau \equiv
A/H$ satisfies
\begin{align*}
\left| \nabla \tau \right| + \left|\nabla^2 \tau \right| 
\leq \eps_{\tau}\quad\text{ on }B_{R-2} \cap \Sigma,
\end{align*}
where
\begin{align*}
  \eps_{\tau}^2 := C \, R^{2n+2} \, \e^{-R/4} 
\end{align*}
and the constant $C$ depends only on $n$, $C_0$, $\delta_0$ and
$\lambda_0$ (and, in particular, not on $R$).

Now fix some small $\eps_0 > 0$, to be reduced as needed, but
depending only on $n$. Combining the compactness result of Lemma
\ref{l:cpns} with Huisken's classification (\cite{H1} and \cite{H2})
of complete shrinkers with $H \geq 0$ and bounded $|A|$, there
exists $R_1>0$ depending on $\lambda_0$, $C_0$, $n$, and $\eps_0$
such that if $R\ge R_1$, then for some $k\in\{1,\ldots n\}$,
$B_{5\sqrt{2n}} \cap \Sigma$ is $C^2$ $\eps_0$-close to a cylinder
$\SS^k \times \RR^{n-k}$ where $\SS^k$ has radius $\sqrt{2k}$. In
particular, we can arrange that
\begin{align}
|A|^2 \leq 3/4\text{ and }1/2 \leq H\quad\text{ on }B_{5\sqrt{2n}} \cap \Sigma,
\label{nearly}
\end{align}
and further, that at every $p$ in $\Sigma\cap B_{5\sqrt{2n}}$ there
are $n-k$ orthonormal eigenvectors
\begin{align*}
v_1 (p) , \ldots, v_{n-k} (p),
\end{align*}
of $A$ with eigenvalues less than $1/\sqrt{100n}$, plus at least one
eigenvector with eigenvalue at least $1/\sqrt{4n}$. Then we can apply
Corollary \ref{c:lowev} to obtain
\begin{align*}
  \left|\kappa_j(p)\right|\leq C \, \eps_{\tau},\qquad j=1,\ldots,n-k,
\end{align*}
where $C$ depends only on $n$, $C_0$, $\delta_0$ and $\lambda_0$.
 
Now fix $p$ in $\Sigma\cap B_{2\sqrt{2n}}$ and define $n-k$
tangential vector fields $v_i$ on $\Sigma$ by
\begin{align*}
  v_i = v_i (p) - \langle v_i (p) , \nn \rangle \, \nn \, .
\end{align*}
Fix a constant $L$ (later we will take $L=2R$) and let $\Omega$ denote
the set of points in $B_{R-2} \cap \Sigma$ that can be reached from $p$ by a
path in $B_{R-2}\cap \Sigma$ of length at most
$L$.

We will show that the $v_i$'s have the following three properties on $\Omega$:
\begin{align}
\left| v_i-v_i(p)\right|&\leq C\,(L+L^2)\,\eps_{\tau}\,,\label{e:v1}  \\
\left| \tau (v_i )\right|&\leq C\,(1+L^2)\,\eps_{\tau}\,,\label{e:v2} \\
\left|  \nabla_{v_i} A \right|&\leq C\,(1+L)(1+L^2)\,\eps_{\tau}\,.
\label{e:v3} 
\end{align}
To prove \eqr{e:v1} and \eqr{e:v2}, suppose that $\gamma:[0,L] \to
\Sigma$ is a curve with $\gamma(0) = p$ and $|\gamma' | \leq 1$ and
that $w$ is a parallel unit vector field along $\gamma$ with $w(0) =
v_i (p)$.  Therefore, the bound on $\nabla \tau$ gives $\left|
\nabla_{\gamma'} \, \tau (w) \right| \leq \eps_{\tau}$ and, thus,
\begin{align*}
  \left| \tau (w)\right|\leq L\,\eps_{\tau}+\left|\tau_p (v_i (p)) \right| 
  \leq (C+L ) \, \eps_{\tau} \, .
\end{align*}
In particular, we also have
\begin{align*}
  \left| A (w) \right|= |H| \, \left| \tau (w) \right| 
  \leq C\, (C+L ) \, \eps_{\tau} \, .
\end{align*}
Therefore, since $\nabla_{\gamma'}^{\RR^{n+1}} w = A (\gamma' , w)$,
the fundamental theorem of calculus gives
\begin{align}	\label{e:keyg}
  \left| w(t) - v_i (p) \right| = \left| w(t) - w(0) \right| \leq C\,
  (L+L^2 ) \, \eps_{\tau} \, .
\end{align}
Since $\langle w(t),\nn\rangle=0$, we see that $\left| \langle v_i(p),
\nn \rangle \right| \leq C\, (L+L^2 ) \, \eps_{\tau}$, giving
\eqr{e:v1}.  Adding \eqr{e:v1} and \eqr{e:keyg} yields
\begin{align*}
  \left| w(t) - v_i \right| \leq C\, (L+L^2 ) \, \eps_{\tau} \, ,
\end{align*}
so we get that
\begin{align*}
  \left| \tau (v_i) \right| \leq \left| \tau (w) \right| + \left| \tau
  \left( w - v_i \right) \right| \leq C\, (1+L^2 ) \, \eps_{\tau}
\end{align*}
which is \eqr{e:v2}. 
   
Next we will see that  \eqr{e:v2} implies \eqr{e:v3}.  Namely,
the  Codazzi equation gives  
\begin{align*}
  \left| \left( \nabla_{v_i} A \right) (x,y) \right| &= 
  \left| \left( \nabla_{x} A \right) (v_i,y) \right| = 
  \left| \left( \nabla_{x} (H \, \tau) \right) (v_i ,y) \right| \notag \\
  &\leq \left|H  \left( \nabla_{x}  \tau \right) (v_i,y) \right|
  + \left| \left( \nabla_{x} H \right)\, \tau (v_i,y)\right| \\
  &\leq C\,  \eps_{\tau} + C \,(1+L)(1+ L^2) \,  \eps_{\tau} \, ,
\end{align*}
where we have used the linear growth estimate $|\nabla H|\leq
1+R|A|\le 1+CL$ as in the proof of Corollary
\ref{c:taubound}. This gives \eqr{e:v3}.

The key for the bounds \eqr{e:v1}--\eqr{e:v3} is that
$\eps_{\tau}$ decays exponentially in $R$ and, thus, these bounds
are extremely small even compared to any power of $R$.
 
Finally, \eqr{e:v1} and \eqr{e:v3} together allow us to extend the
nearly sharp bounds on $A$ and $H$ in $B_{5\sqrt{2n}} \cap \Sigma$
given by \eqr{nearly} to all of $\Omega$. Define the linear functions
\begin{align*}
  f_i (x) = \langle v_i (p) , x \rangle \,, \qquad i=1,\ldots,n-k.
\end{align*}
From the $\eps_0$ closeness to the cylinder in the ball
$B_{5\sqrt{2n}}$, we know that
\begin{align*}
  \Sigma_0 \equiv B_{5\sqrt{2n}} \cap \Sigma \cap \{ f_1 = \cdots =
  f_{n-k} = 0 \}
\end{align*}
is a compact topological $\SS^k$ of radius approximately $\sqrt{2k}$.

When $n-k=1$ (there is only one approximate translation), then the
flow of the vector field $v_1/|v_1|^2$ starting from $\Sigma_0$
preserves the level sets (in $\Sigma$) of $f_1$, the flow vector field
nearly equals the constant vector $v_i(p)$ by \eqr{e:v1}, and the
second fundamental form $A$ is almost parallel along the flow lines,
for as long as the flow remains within $\Omega$.  Note that if
$R>30n$, then the disk $D$ of radius $\sqrt{2n}+1$ in the plane
$f_1=R-3$ with center the point closest to the origin lies within
$B_{R-2}$. Also, by considering the shape of the approximate cylinder,
the path distance within $\Sigma$ between $p$ and a point $x(t)$ on
the flow line is no more than the length of the flow line plus
$(5+2\pi)\sqrt{2n}+1$. So if we take $L=2R$ and $R^3\eps_\tau$ small
enough, then each flow line hits $D$ before it leaves $\Omega$.
Similarly, if we flow along the negative vector field, then again each
flow line hits $-D$ before it leaves $\Omega$.  Thus, the connected
component of $B_{R-3} \cap \Sigma$ containing $\Sigma_0$ is contained
in $\Omega$ and in the union of the flow lines.  In particular, the
nearly sharp bounds on $A$ and $H$ in $B_{5\sqrt{2n}} \cap \Sigma$
extend to this component. By the maximum principle, every component of
$B_{R-3} \cap \Sigma$ meets $\overline{B}_{\sqrt{2n}}$.  However,
there is only one such component since $B_{5\sqrt{2n}} \cap \Sigma$ is
cylindrical. Thus, the nearly sharp bounds are valid on $B_{R-3}\cap
\Sigma$, and the proof is complete for this case.
  
We argue similarly when $n-k > 1$, except that we construct a
single ``radial flow'' rather than doing each flow successively as
this seems simpler.  First, define $f$ by
\begin{align*}
  f^2 = \sum_{i=1}^{n-k} f_i^2 \, ,
\end{align*}
and then let 
\begin{align*}
  v = \frac{ \nabla f}{\left| \nabla f \right|^2}\,,
\end{align*}
where as before $\nabla$ is the tangential derivative, 
Thus,  the flow by $v$ preserves the level sets of $f$.  Note that
\begin{align*}
  \nabla f = \frac{ \sum f_i \nabla f_i}{f} = \sum \frac{f_i}{f} \, v_i\,.
\end{align*}
Since the $v_i
(p)$'s are orthonormal and each $v_i - v_i (p)$ is small by
\eqr{e:v1}, it follows that $\left| \nabla f \right|$ is almost one.
Furthermore, $v$ is given at each point as a linear combination of the
$v_i$'s, so \eqr{e:v3} implies that $\left| \nabla_{v} A \right|$ is
small.  Arguing as in the previous case completes the proof.

\end{proof}


\begin{thebibliography}{A}
 
 \bibitem[Al]{Al}
W.K Allard, \emph{On the first variation of a varifold}. Ann. of Math. (2) 95 (1972), 417--491.

\bibitem[AA]{AA}
W.K. Allard and F.J. Almgren, Jr, 
{\emph{On the radial behavior of minimal surfaces and the uniqueness of their tangent cones}}. 
Ann. of Math. (2) 113 (1981), no. 2, 215--265.

\bibitem[A]{A}
S.B. Angenent,
\emph{Shrinking doughnuts}, 
In: Nonlinear diffusion equations and their equilibrium states,
Birkh\"auser, Boston-Basel-Berlin, 3, 21-38, 1992.

 
\bibitem[AAG]{AAG} 
S. Altschuler, S.B. Angenent, and Y. Giga, 
\emph{Mean curvature flow through singularities for surfaces of rotation}, 
Journal of geometric analysis,  
Volume 5, Number 3, (1995) 293--358.

\bibitem[B]{B} 
K. Brakke,  The motion of a surface by its mean curvature. 
Mathematical Notes, 20. Princeton University Press, Princeton, N.J., 1978.

\bibitem[Ch]{Ch}
D. Chopp,
\emph{Computation of self-similar solutions for mean curvature flow}.
Experiment. Math. 3 (1994), no. 1, 1--15.

\bibitem[CIMW]{CIMW}
T.H. Colding, T. Ilmanen, W.P. Minicozzi II and B. White,
\emph{The round sphere minimizes entropy among closed self-shrinkers}, J. Differential Geom., 
vol. 95, no. 1 (2013) 53--69.

 
\bibitem[CM1]{CM1}
T.H. Colding and W.P. Minicozzi II,
\emph{Generic mean curvature flow I; generic singularities},  Annals of Math., Volume 175 (2012), Issue 2, 755--833.
%http://arxiv.org/pdf/0908.3788.

\bibitem[CM2]{CM2}
\bysame, 
\emph{Smooth compactness of self-shrinkers}, Comm. Math. Helv., 87 (2012)
463--475.




\bibitem[CM3]{CM3}
\bysame, \emph{Uniqueness of blowups and \L{}ojasiewicz inequalities}, Annals of Math., to appear.

%\bibitem[CM3]{CM3}
%\bysame, \emph{\L{}ojasiewicz inequalities and applications}, preprint.

\bibitem[CM4]{CM4}
\bysame, \emph{The singular set of mean curvature flow with generic singularities}, preprint.









\bibitem[CMP]{CMP}
T.H. Colding, W.P. Minicozzi II and E.K. Pedersen,
{\emph{Mean curvature flow}},
Bulletin of the AMS, DOI: http://dx.doi.org/10.1090/S0273-0979-2015-01468-0.


\bibitem[EH]{EH}
K. Ecker and G. Huisken, 
\emph{ Interior estimates for hypersurfaces moving by mean curvature}. 
Invent. Math. 105 (1991), no. 3, 547--569.

\bibitem[GGS]{GGS}
M. Giga,  Y. Giga, and J.  Saal,   
\emph{Nonlinear partial differential equations. Asymptotic behavior of solutions and self-similar solutions}. Progress in Nonlinear Differential Equations and their Applications, 79. Birkh\"auser Boston, Inc., Boston, MA, 2010.
 
\bibitem[H1]{H1}G. Huisken,
\emph{Asymptotic behavior for singularities of the mean curvature flow}.
J. Differential Geom. 31 (1990), no. 1, 285--299.

\bibitem[H2]{H2}\bysame,
\emph{Local and global behaviour of hypersurfaces moving by mean curvature}.
Differential geometry: partial differential equations on manifolds (Los Angeles, CA, 1990), 175--191,
Proc. Sympos. Pure Math., 54, Part 1, Amer. Math. Soc., Providence, RI, 1993.

\bibitem[HS]{HS}
G. Huisken and C. Sinestrari, \emph{ Convexity estimates for mean
curvature flow and singularities of mean convex surfaces}, Acta
Math. 183 (1999) no. 1, 45--70.

 \bibitem[I1]{I1}
T. Ilmanen,
\emph{Singularities of Mean Curvature Flow of Surfaces}, preprint, 1995, \\
http://www.math.ethz.ch/\~{}/papers/pub.html.

 \bibitem[I2]{I2}
T. Ilmanen, Elliptic regularization and partial regularity for motion by mean curvature, Memoirs Amer. Math. Soc.  
520 (1994). 


\bibitem[KKM]{KKM}
N. Kapouleas, S. Kleene, and N.M. M\"oller, 
\emph{Mean curvature self-shrinkers of high genus: non-compact examples}, 
{\emph{J. Reine Angew. Math.}}, to appear, http://arxiv.org/pdf/1106.5454.

\bibitem[N]{N} 
X.H. Nguyen, 
\emph{Construction of Complete Embedded Self-Similar Surfaces under Mean Curvature Flow. Part III}, 
preprint, http://arxiv.org/pdf/1106.5272v1.
 

\bibitem[SS]{SS}
H. Soner and P. Souganidis,  
\emph{Singularities and uniqueness of cylindrically symmetric surfaces moving by mean curvature}. 
Comm. Partial Differential Equations 18 (1993), no. 5-6, 859--894.

\bibitem[Si]{Si} 
L. Simon, \emph{Asymptotics for a class of evolution equations, with applications to geometric problems}, Annals of Mathematics 118 (1983), 525--571.


\bibitem[S]{S}
A. Stone, \emph{A density function and the structure of singularities of 
 the mean curvature flow},
 Calc. Var. Partial Differential Equations 2 (1994), no. 4, 443--480. 
 
 

\bibitem[W1]{W1}
B. White, \emph{ The nature of singularities in mean curvature flow of
mean-convex sets}.  J. Amer. Math. Soc.  16  (2003),  no. 1,
123--138.



\bibitem[W2]{W2}
\bysame,
\emph{A local regularity theorem for mean curvature flow}. 
Ann. of Math. (2) 161 (2005), no. 3, 1487--1519.

\bibitem[W3]{W3}
\bysame,
\emph{Partial regularity of mean-convex hypersurfaces flowing by mean curvature},
Int. Math. Res. Notices (1994)  185--192.
 
\end{thebibliography}
\end{document}